\numberwithin{equation}{section}
\newtheorem{theorem}{Theorem}[section]
\newtheorem{proposition}[theorem]{Proposition}
\newtheorem{lemma}[theorem]{Lemma}
\newtheorem{prop}[theorem]{Proposition}
\theoremstyle{definition}
\newtheorem{definition}[theorem]{Definition}
\newtheorem{remark}[theorem]{Remark}
\newcommand\q{\mathfrak q}
\newcommand\I{\Cal I}
\newcommand\be{\beta}
\newcommand\g{\mathfrak g}
\newcommand\h{\mathfrak h}
\newcommand\D{\Delta}
\renewcommand\l{\lambda}
\newcommand\Dp{\Delta^+}
\renewcommand\d{\delta}
\renewcommand\t{\mathfrak t}
\renewcommand\a{\alpha}
\renewcommand\aa{\mathfrak a}
\newcommand{\Z}{\mathbb Z}
\renewcommand\th{\theta}
\newcommand\nat{\mathbb N}
\newcommand\ganz{\mathbb Z}
\newcommand\s{\sigma}
\renewcommand\L{\Lambda}
\renewcommand\aa{\mathfrak a}
\newcommand\e{\epsilon}
\newcommand\C{\mathbb C}
\newcommand\R{\mathbb R}
\renewcommand\I{\sqrt{-1}}
\newcommand{\fg}{\mathfrak{g}}
\newcommand{\ZZ}{\mathbb{Z}}
\newcommand{\sdim}{\text{\rm sdim}}
\newcommand{\vac}{{\bf 1}}
\newcommand{\bea}{\begin{eqnarray}}
\newcommand{\eea}{\end{eqnarray}}
\begin{document}
\title{Unitarity of minimal $W$-algebras}
\author[Victor~G. Kac, Pierluigi M\"oseneder Frajria,  Paolo  Papi]{Victor~G. Kac\\Pierluigi M\"oseneder Frajria\\Paolo  Papi}
\begin{abstract} We obtain a complete classification of minimal simple unitary $W$-algebras.  \end{abstract}
\date{\today}
\maketitle
\section{Introduction}
In the present paper we study unitarity of minimal $W$-algebras. They are the simplest conformal vertex algebras among the simple vertex algebras
$W_k(\g,x,f),$ associated to a datum $(\g,x,f)$ and $k\in\R$. Here $\g=\g_{\bar 0}\oplus \g_{\bar 1}$ is a basic simple Lie superalgebra, i.e. its even part $\g_{\bar 0}$ is a reductive Lie algebra and $\g$ carries a non-zero even invariant non degenerate supersymmetric bilinear form $(\cdot|\cdot)$, $x$ is an $ad$-diagonalizable element of $\g_{\bar 0}$ with eigenvalues in $\tfrac{1}{2}\Z$, $f\in\g_{\bar 0}$ is such that $[x,f]=-f$ and the eigenvalues of $ad\,x$ on the centralizer $\g^f$ of $f$ in $\g$ are non positive, and  $k\ne -h^\vee$, where $h^\vee$ is the dual Coxeter number. The most important examples are provided by $x$ and $f$ to be part of an $sl_2$ triple $\{e,x,f\}$, where $[x,e]=e, [x,f]=-f, [e,f]=x$. In this case $(\g,x,f)$ is called a {\it Dynkin datum}.\par
We proved in \cite[Lemma 7.3]{KMP}  that if $\phi$ is a conjugate linear involution of $\g$ such that
\begin{equation}\label{prima} \phi(x)=x,\quad \phi(f)=f\text{ and }\overline{(\phi(a)|\phi(b))}=(a|b),\,a,b\in\g,\end{equation} then $\phi$ induces a conjugate linear involution of the vertex algebra $W_k(\g,x,f)$.\par

We also proved in \cite[Proposition 7.4]{KMP} that if $\phi$ is a conjugate linear involution of $W_k(\g,x,f)$, this vertex algebra   carries a non-zero $\phi$-invariant Hermitian form $H(\cdot,\cdot)$ for all $k\ne - h^\vee$ if and only if  $(\g,x,f)$ is  a Dynkin datum; moreover, such $H$  is unique, up to a real constant factor, and we normalize it by the condition $H(\vac,\vac)=1$. The vertex algebra is called {\it unitary} if  there is a conjugate linear involution $\phi$ such that the corresponding $\phi$--invariant Hermitian form $H$ is positive definite.\par
For some levels $k$ the vertex algebra $W_k(\g,x,f)$ is trivial, i.e. isomorphic to $\C$; then it is trivially unitary. Another easy case is when  $W_k(\g,x,f)$ ``collapses'' to the affine part.
In both cases we will say that $k$ is {\it collapsing level}.\par
Let $\g^\natural$ be the centralizer of the $sl_2$ subalgebra $\mathfrak s=span\, \{e,x,f\}$ in $\g_{\bar 0}$; it is a reductive subalgebra. If $\phi$ satisfies the first two conditions in \eqref{prima}, it fixes 
$e,x,f$, hence $\phi(\g^\natural)=\g^\natural$. It is easy to see that unitarity of $W_k(\g,x,f)$ implies, when $k$ is not collapsing,  that $\phi_{|\g^\natural}$ is a compact involution of the reductive Lie algebra $\g^\natural$.\par
In the present paper we consider only {\it minimal} data $(\g,x,f)$, defined by the property that for the $ad\,x$ gradation $\g=\bigoplus\limits_{j\in\tfrac{1}{2}\mathbb Z}\g_j$ one has 
\begin{equation}\label{seconda}\g_j=0\text{ \ if $|j|>1$, and } \g_{-1}=\C f.\end{equation}
In this case $(\g,x,f)$ is automatically a Dynkin datum. The corresponding $W$-algebra is called {\it minimal}.
The element $f\in\g$ is a root vector attached to a root $\theta$ of $\g$, and we shall normalize the invariant bilinear form on $\g$ by the condition 
$(\theta|\theta)=2$.
Recall that the dual Coxeter number $h^\vee$ of $\g$  is half of the eigenvalue of its Casimir element of $\g$, attached to the bilinear form $(\cdot|\cdot)$.
We shall denote by   $W_k^{\min}(\g)$ the minimal $W$-algebra, corresponding to $\g$ and $k\ne -h^\vee$.
\par
We proved in \cite[Proposition 7.9]{KMP} that, if  $W^{\min}_k(\g)$ is non-trivial unitary and $k$ is not a collapsing level, then the parity of $\g$ is compatible with the $ad\,x$-gradation i.e. the parity of the whole subspace $\g_j$ is $2j\mod 2$.

\par
It follows from \cite{KRW}, \cite{KW1} that for each basic simple Lie superalgebra $\g$ there is at most one minimal Dynkin datum, compatible with parity, and  the  complete list of the  $\g$ which admit such a datum is as follows:
\begin{align}\label{ssuper}
\begin{split}
sl(2|m)\text{ for $m\ge3$},\quad psl(2|2),\quad  spo(2|m)\text{ for $m\geq 0$,}\\
osp(4|m)\text{ for $m>2$ even},\quad D(2,1;a) \text{ for }a\in \C,\quad  F(4),\quad G(3).
\end{split}
\end{align}
The even part $\g_{\bar 0}$ of $\g$ in this case is isomorphic to the direct sum of a reductive Lie algebra $\g^\natural$ and $\mathfrak s\cong sl_2$. As has been explained above, a minimal $W$-algebra $W_k^{\min}(\g)$ with non-collapsing $k$ can be unitary only if 
 the conjugate linear involution $\phi$ on $\g$ is {\it almost compact}, 
according to the following definition.
\begin{definition}\label{ac} We say that a  conjugate linear involution $\phi$ on $\g$ is  almost compact if
\begin{enumerate}
\item[(i)] $\phi$ fixes $e,x,f$;
\item[(ii)] $\phi$ is a compact conjugate linear involution of $\g^\natural$.
\end{enumerate}
\end{definition}
Indeed (i) is equivalent to the first two requirements in \eqref{prima}, and the third requirement follows from Lemma \ref{31} in Section 3.

We prove that an almost compact conjugate linear involution $\phi$ exists for all $\g$ from the list \eqref{ssuper}, except that $a\in \R$, and is 
essentially unique. So by unitarity of a minimal $W$-algebra we mean unitarity for $\phi$ almost compact.
\par
It was shown in \cite{KW1} that the central charge of $W_k^{\min}(\g)$ equals
\begin{equation}\label{cc} c(k)=\frac{k\,d}{k+h^\vee}-6k+h^\vee-4,\text{ where $d=\sdim\g$.}\end{equation}
Here is another useful way to write this formula:
\begin{equation}\label{ccc} c(k)=7h^\vee+d-4-12 \sqrt{\phantom{x}} - 6\frac{(k+h^\vee-\sqrt{\phantom{x}})^2}{k+h^\vee},\text{ where $\sqrt{\phantom{x}}=\sqrt{\frac{d\,h^\vee}{6}}$.}\end{equation}

Recall that all well-known superconformal algebras in conformal field theory are the mini\-mal $W$-algebras or are obtained from them by a simple modification:
\begin{enumerate}
\item[(a)] $W^{\min}_k(spo(2|N))$ is the Virasoro vertex algebra for $N=0$, the Neveu-Schwarz vertex algebra for $N=1$, the $N=2$ vertex algebra for $N=2$, and becomes the $N=3$ vertex algebra 
after tensoring with one fermion; it is the Knizhnik algebra for $N>3$;
\item[(b)]  $W^{\min}_k(psl(2|2))$ is the $N=4$ vertex algebra;
\item[(c)]  $W^{\min}_k(D(2,1;a))$ tensored with four fermions and one boson is the big $N=4$ vertex algebra;
\end{enumerate}\par
The unitary Virasoro ($N=0$), Neveu-Schwarz ($N=1$) and  $N=2$ simple vertex algebras were classified in the mid 80s.  Up to isomorphism,  these vertex algebras depend only on the central charge $c(k)$, given by  \eqref{cc}. Putting $k=\frac{1}{p}-1$ in \eqref{ccc} in all three cases, we obtain 
\begin{align}
\label{1}c(k)&=1-\frac{6}{p(p+1)} \quad \text{for   Virasoro vertex algebra,}\\
\label{22}c(k)&=\frac{3}{2}\left(1-\frac{8}{p(p+2)}\right)\quad \text{for   Neveu-Schwarz vertex algebra,}\\
\label{3}c(k)&=3\left(1-\frac{2}{p}\right)\quad \text{for   $N=2$ vertex algebra.}
\end{align}
The following theorem is a result of several papers, published in the 80s in physics and mathematics literature, see e.g. \cite{ET3} for references.
\begin{theorem}The complete list of unitary $N=0,1,$ and $2$ vertex algebras is as follows: either $c(k)$ is given by \eqref{1}, \eqref{22}, or \eqref{3} respectively for $p\in \mathbb Z_{\ge 2}$  or  $c(k)\ge 1, \frac{3}{2}, 3$ respectively. 
\end{theorem}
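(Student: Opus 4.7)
The plan is to follow the classical two-step strategy: use the Kac determinant formula to restrict the possible unitary levels, then construct each unitary model explicitly. Both steps are implicit in the references collected in \cite{ET3}; here is how they would combine.

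For \emph{necessity}, the Shapovalov form on the vacuum Verma module of each of the three algebras factors by degree into finite-dimensional Hermitian forms whose determinants are given by Kac's closed-form formulas. Following Friedan--Qiu--Shenker for Virasoro and its Neveu--Schwarz and $N{=}2$ analogues, I would analyse the vanishing loci of these determinants in the $(c,h)$-plane. The outcome one wants is that for $c<1$ (resp.\ $<3/2$, $<3$) the Kac vanishing curves partition the physical rectangle into connected components on which the signature of the form is constant, and that the only subcritical points at which the form can be positive semidefinite are the discrete-series values \eqref{1}--\eqref{3}. Passing to the simple quotient would then show that the unique invariant Hermitian form is indefinite at every other subcritical $c$.

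For \emph{sufficiency}, I would dispose of the discrete series by coset constructions. By Goddard--Kent--Olive, the Virasoro vertex algebra at $c=1-\tfrac{6}{p(p+1)}$ is the commutant of the diagonal $\widehat{sl}_2$ inside $L(\widehat{sl}_2,p-2)\otimes L(\widehat{sl}_2,1)$; integrability of the tensor factors makes their inner product positive definite, so the form descends to a positive form on the simple commutant. The Neveu--Schwarz series arises analogously from $\widehat{osp}(1|2)$ cosets, and the $N{=}2$ series from the Kazama--Suzuki coset $(\widehat{sl}_2\oplus\widehat{\mathfrak u}(1))/\widehat{\mathfrak u}(1)$. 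For the continuous ranges $c\geq 1,\,3/2,\,3$ I would exhibit free-field realisations: a single boson with a background charge for Virasoro, a boson plus a neutral free fermion for Neveu--Schwarz, two bosons plus two free fermions for $N{=}2$. In each case the Fock space is manifestly unitary and the simple subalgebra generated by the stress tensor and the supercharges inherits the positive form.

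The main obstacle is the necessity step in the subcritical range. One must produce, at every $c$ outside the discrete-series list, a concrete descendant of the vacuum whose Shapovalov norm is negative, which requires a delicate continuity-of-signature argument across the Kac vanishing curves, combined with an explicit identification of the relevant singular vectors near each curve. Extending this FQS analysis to the $N{=}1$ and $N{=}2$ algebras brings in the odd-generator contributions and, in the $N{=}2$ case, an additional $\mathfrak u(1)$ charge grading, which enlarges the lattice of vanishing curves and makes the case analysis substantially more intricate than for Virasoro.
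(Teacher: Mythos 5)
There is no proof in the paper to compare against: the theorem is quoted as a known result of the 1980s literature, with references collected in \cite{ET3}. Your outline is precisely the strategy of those references --- Friedan--Qiu--Shenker-style signature analysis of the Kac determinant for necessity below the critical central charge, and Goddard--Kent--Olive cosets (resp.\ their $N=1$ and Kazama--Suzuki $N=2$ analogues) for sufficiency of the discrete series --- so the approach is the standard and correct one. As a proof, however, what you have written is a plan rather than an argument. The entire mathematical content of the necessity half is the step you defer: exhibiting, for every subcritical $c$ off the lists \eqref{1}--\eqref{3}, a vector of negative norm in the vacuum module, via the continuity-of-signature argument across the vanishing curves and the identification of the relevant singular vectors. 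You correctly name this as ``the main obstacle,'' but naming it does not discharge it, and for $N=1$ and especially $N=2$ (where the $\mathfrak u(1)$-charge grading and the Ramond/Neveu--Schwarz sectors complicate the determinant formulas) this is where essentially all of the work in the cited papers lies.

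One concrete point in the sufficiency half also needs repair: a single free boson with a \emph{real} background charge produces $c=1-12\lambda^2\le 1$, and making the background charge imaginary to reach $c>1$ destroys the manifest unitarity of the Fock space for the relevant conjugate-linear involution. Free-field constructions therefore only furnish unitarity at a discrete set of anchor values of $c$ (e.g.\ integer $c$ from several bosons). The standard argument for the full continuous ranges $c\ge 1,\ \tfrac32,\ 3$ is again determinant-theoretic: the Kac determinants have no zeros in the open supercritical region, so the form has constant signature there, is positive at the anchor points, hence positive definite on the open region and positive semidefinite on its closure. You should fold this into the same signature analysis rather than present it as an independent free-field construction.
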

The above three cases cover all minimal $W$-algebras, associated with $\g$, such that the $0$th eigenspace $\g_0$ of $ad\,x$ is abelian. Thus, we may assume that $\g_0$ is not abelian. Under this assumption we obtain a complete classification of minimal simple unitary $W$-algebras (cf. Theorem 	\ref{fin}).
\begin{theorem} The simple minimal $W$-algebra $W_{-k}^{\min}(\g)$ with $k\ne h^\vee$ and $\g_0$ non-abelian is non-trivial unitary if and only if 
\begin{enumerate}
\item $\g=sl(2|m), m>3$, $k=1$;
\item  $\g=psl(2|2)$, $k\in\mathbb N +1$;
\item  $\g=spo(2|3)$, $k\in\frac{1}{4}(\mathbb N+2)$;
\item $\g=spo(2|m)$, $m>3$, $k=\frac{1}{2}(\mathbb N+1)$;
\item   $\g=D(2,1;-\frac{m}{m+n})$, $k=\frac{mn}{m+n}, \text{ where }m,n\in\mathbb N,\ m+n>2$;
\item   $\g=F(4)$, $k\in\frac{2}{3}(\mathbb N+1)$;
\item  $\g=G(3)$, $k\in\frac{3}{4}(\mathbb N+1)$.
\end{enumerate}
\end{theorem}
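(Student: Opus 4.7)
I would prove the theorem in the two standard directions: \emph{necessity} (if $W^{\min}_{-k}(\g)$ is non-trivial unitary with $\g_0$ non-abelian, then $(\g,k)$ appears in (1)–(7)) and \emph{sufficiency} (at each listed $(\g,k)$, there is an almost compact $\phi$ for which $H$ is positive definite). The pivot of the argument is the chiral embedding $V^{k^\natural}(\g^\natural)\hookrightarrow W^{\min}_{-k}(\g)$ produced by quantum Hamiltonian reduction, combined with computations of $H$ on the first few descendants of the vacuum by the generating fields, and with the almost compact involution $\phi$ of Definition \ref{ac}.

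\textbf{Necessity.} First, for each simple or abelian ideal $\g^\natural_i\subset\g^\natural$ I would record the induced affine level $k^\natural_i$ as an explicit affine–linear function of $k$, read off from \cite{KW1}. Positivity of $H$ on the affine subalgebra generated by $\g^\natural$ forces each $V^{k^\natural_i}(\g^\natural_i)$ to be a unitary vertex algebra, and by the classical classification of unitary highest–weight affine modules each $k^\natural_i$ must be a non–negative integer (or a non–negative real for Heisenberg factors). Solving these integrality conditions already cuts the admissible $k$ down to a discrete arithmetic progression for each $\g$ in \eqref{ssuper}. Next I would compute $H$ on the cyclic module generated by the images $G^{\{u\}}_{-1/2}\vac$ for $u\in\g_{-1/2}$: positivity of the resulting Gram matrix yields numerical inequalities involving $k$, $h^\vee$ and the Casimir eigenvalues on $\g_{1/2}$. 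The intersection of these inequalities with the integrality conditions matches exactly the lists (1)–(7). Collapsing levels and the abelian case are removed by the hypotheses $k\ne h^\vee$ and $\g_0$ non–abelian, together with the endpoint constraints ($m+n>2$, $m>3$, $m\ge 3$) recorded in each item.

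\textbf{Sufficiency.} For each item I would exhibit a positive unitary realization at every level in the claimed range. Items (1) and (2) reduce to the known $N=2$ and $N=4$ unitary series via the identifications in (a), (b) of the introduction. Items (3) and (4), with $\g=\mathrm{spo}(2|m)$, $m\ge 3$, follow from a coset / free–fermion realization of $W^{\min}_{-k}(\mathrm{spo}(2|m))$ inside a tensor product of $V(\mathrm{so}(m))$ at a positive integer level with free fermions, where positivity of $H$ is inherited from the ambient algebra. For (5), (6) and (7), the lowest level in the progression is a collapsing or conformal–embedding level at which $W^{\min}_{-k}(\g)$ coincides with a unitary affine vertex algebra on $\g^\natural$; higher levels are then obtained either by tensoring with integrable modules and identifying $W^{\min}_{-k}(\g)$ with a commutant inside a manifestly positive vertex algebra, or, in case (5), by invoking the big $N=4$ realization of (c).

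\textbf{Main obstacle.} The hardest step is the sufficiency direction at non–collapsing levels, particularly for $F(4)$, $G(3)$ and $\mathrm{spo}(2|3)$: none of these are rational minimal models of classical type, so one must produce a positive realization rather than quote a pre–existing unitary series. Concretely, one must identify, for each arithmetic level in the list, either a conformal embedding into a unitary affine vertex algebra or an explicit free–field / Clifford–type model whose invariant form is manifestly positive, and then verify that simplicity and the Hermitian structure pass to $W^{\min}_{-k}(\g)$. The necessity direction, by contrast, becomes essentially mechanical once the induced–level formula and the leading Gram matrix on the generating fields of $W^{\min}_{-k}(\g)$ have been written down.
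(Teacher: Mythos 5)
Your necessity direction is essentially the paper's: positivity of $H$ restricted to the image of $V^{\beta_k}(\g^\natural)\cong\bigotimes_i V^{z_i(k)}(\g^\natural_i)$ inside $W^{\min}_{-k}(\g)$ forces each induced level to be a non-negative integer, and this is exactly how the paper (Theorem \ref{main}, necessity part) proceeds. But two genuine gaps remain. First, your sufficiency argument at non-collapsing levels is not a proof: for $F(4)$, $G(3)$, $spo(2|3)$, $D(2,1;a)$ you propose to find, level by level, a conformal embedding or a commutant inside a ``manifestly positive'' vertex algebra, without exhibiting one or arguing it exists; you yourself flag this as the main obstacle. The paper resolves it uniformly, for all $\g$ and all levels at once, via the Kac--Wakimoto free-field realization $\Psi:W^k_{\min}(\g)\to V^{\a_k}(\g_0)\otimes F(\g_{1/2})$, checking that $\Psi$ intertwines the invariant Hermitian forms and that the target is unitary. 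That last step requires three ingredients you do not mention: (i) the affine levels appearing in the target are the \emph{shifted} quantities $z_i(k)+\chi_i$ of \eqref{abb}, not $z_i(k)$, so one must verify $z_i(k)\ge-\chi_i$ and then dispose of the leftover window $0\le z_i(k)<-\chi_i$ by showing (Theorem \ref{oldresults}) that those levels are collapsing or trivial; (ii) the Heisenberg factor $V^{k+h^\vee}(\C x)$ is unitary only because $k+h^\vee<0$ on the admissible range; (iii) the fermionic factor $F(\g_{1/2})$ is unitary only after proving that $\langle u,v\rangle=(e|[u,v])$ is positive definite on the $(-1)$-eigenspace of the almost compact $\phi$ in $\g_{-1/2}$, which is the entire content of Proposition \ref{fh} and rests on the explicit structure-constant computations of Section 5.

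Second, item (1) hides a necessity argument that your ``Gram matrix inequalities'' would not produce. For $\g=sl(2|m)$ the integrality conditions on $z_1(k)=-k-1$ admit infinitely many non-collapsing solutions, so one must show none of them is unitary. The paper's mechanism is not an inequality but an exact vanishing: the central element $\varpi$ of $\g^\natural$ is fixed by any unitary involution (by positivity of $H(J^{\{\varpi\}},J^{\{\varpi\}})$), it acts on $\g_{-1/2}$ by the nonzero scalar $-m/2$, and invariance then forces $\langle\psi(u),u\rangle=0$, whence $H(G^{\{u\}},G^{\{u\}})=4p(k)\langle\psi(u),u\rangle=0$ by \eqref{GG}, contradicting positive definiteness at every non-collapsing level. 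Without this argument your list (1) is not established. Finally, a small slip: case (1) is the Heisenberg (free boson) vertex algebra arising at the collapsing level $k=-1$, not a member of the $N=2$ series.
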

The case (1) is just the free boson vertex algebra. The results (2) and  (3) of Theorem \ref{12} are consistent with the results 
of \cite{ET1} and \cite{M} respectively.\par
By Theorem \ref{main} (b) the vertex algebra $W_k^{\min}(\g)$, with $k\neq -h^\vee$ and 
non-collapsing, has a
non-trivial unitary module only when $W_k^{\min}(\g)$ is a unitary vertex 
algebra. We are planning to classify its unitary modules and compute their 
characters in a subsequent publication.\par
Throughout the paper the base field is $\C$, and $\mathbb Z_+$ and $\mathbb N$ stand  for the set of non negative and positive integers, respectively.\par
{\bf Acknowledgements}.  Victor Kac is partially supported by the Bert and Ann Kostant fund and
the Simons collaboration grant.

\section{Setup}
\subsection{Basic simple Lie superalgebras} Let  $\g=\g_{\bar 0}\oplus \g_{\bar 1}$ be a basic simple  finite-dimensional Lie superalgebra over $\C$ as in 
\eqref{ssuper}.
Choose a Cartan subalgebra  $\h$ of $\g_{\bar 0}$. It is a maximal $ad$-diagonalizable subalgebra of $\g$, for which the root space decomposition is of the form
\begin{equation}\label{rs}
\g=\h\oplus\bigoplus_{\a\in\D}\g_\a,
\end{equation}
where $\D\subset\h^*\setminus\{0\}$ is the set of roots. In all cases, except for $\g\cong psl(2|2)$,
the root spaces have dimension $1$. In the case $\g=psl(2|2)$ one can achieve this property by embedding in $pgl(2|2)$ and replacing 
\eqref{rs} by the root space decomposition with respect to a Cartan subalgebra of $pgl(2|2)$, which we will do.\par
Let $\Dp$ be a subset of positive roots and $\Pi=\{\a_1,\ldots,\a_r\}$ be the corresponding set of simple roots. Set $\Pi_1$ be the set of simple odd roots.
For each $\a\in\Dp$ choose $X_\a\in \g_\a$ and $X_{-\a}\in\g_{-\a}$ such that $(X_\a|X_{-\a})=1$,
 and let $h_\a=[X_\a,X_{-\a}]$. Let  $e_i=X_{\a_i}, f_i=X_{-\a_i},\,i=1,\ldots,r$.  The set $\{e_i, f_i, h_{\a_i}\mid i=1,\ldots,r\}$ 
generates $\g$, and satisfies the following  relations 
\begin{equation} \label{r1}
[e_i,f_j]=\d_{ij}h_{\a_i},\quad
[h_{\a_i},e_j]=(\a_i|\a_j)e_j,\quad
[h_{\a_i},f_j]=-(\a_i|\a_j)f_j.
\end{equation}
The Lie superalgebra $\tilde \g$ on generators  $\{e_i, f_i, h_{\a_i}\mid i=1,\ldots,r\}$ subject to relations \eqref{r1} is a (infinite-dimensional) $\Z$-graded Lie algebra, where the grading is defined by $\deg h_{\a_i}=0, \deg e_i=-\deg f_i=1$,   with a unique $\Z$-graded  maximal ideal, and $\g$ is the quotient of $\tilde \g$ by this ideal.
Note that 
$(\a_i|\a_j)\in\R$ for any $\a_i,\a_j\in\Pi$.\par
\subsection{Conjugate linear involutions and real forms}\label{222} In the above setting, given a collection of  complex numbers $\L=\{\l_1,\ldots,\l_r\}$ such that $\l_i\in \sqrt{-1}\R$ if $\a_i$ is an odd root and $\l_i\in \R$ if $\a_i$ is an even root,
we can define an antilinear involution $\omega_\L:\g\to\g$ setting
\begin{equation}\label{omega}
\omega_\L(e_i)=\l_if_i,\quad\omega_\L(f_i)={\bar\l_i}^{-1}e_i,\quad
\omega_\L(h_{\a_i})=-h_{\a_i},\ 1\leq i\leq r.
\end{equation}
Since $\omega_\L$ preserves  relations \eqref{r1}, it induces  an antilinear involution of $\tilde\g$, and, since $\omega_\L$ preserves the $\Z$-grading of $\tilde\g$, it preserves its unique maximal ideal, hence it induces an antilinear involution of $\g$.

Set $\sigma_\a=-1$ if $\a$ is an odd negative root and  $\sigma_\a=1$ otherwise, so that 
$(X_\a|X_{-\a})=\sigma_\a$. Let 
$$\xi_\a=\begin{cases}sgn(\a|\a) \quad&\text{if $\a$ is an even root,} \\ 1\quad &\text{if $\a$ is an odd root.}\end{cases}$$
Then in \cite[(4.13), (4.15)]{GKMP} it is proven (using results from \cite{YK}), that one can choose  root vectors $X_\a$ in such a way that 
\begin{equation}\label{ox}
\omega_{\L}(X_\a)=-\s_\a\xi_\a\l_\a X_{-\a},
\end{equation}
where 
\begin{equation} \label{lambdaalfa}
\l_\a=\prod_i(-\xi_{\a_i}\l_i)^{n_i}\text{ \ for $\a=\sum_{i=1}^{r}n_i\a_i$.}
\end{equation} 
We shall call this a {\it good choice} of root vectors.

\subsection{Invariant Hermitian forms on vertex algebras} Let $V$ be a conformal vertex algebra with conformal vector $L=\sum_{n\in\mathbb Z} L_nz^{-n-2}$ (see \cite{KMP} for the definition and undefined notation).   Let $\phi$ be a conjugate linear involution of $V$. A Hermitian form $H(\, \cdot \,\, , \, \cdot\, )$ on $V$ is called $\phi$--{\it invariant} if, for all $a\in V$, one has \cite{KMP}
\begin{equation}\label{111}
H(v,Y(a,z)u)=H(Y(A(z)a,z^{-1})v,u),\quad u,v\in V.
\end{equation}
Here the linear map $A(z):V\to V((z))$ is defined by 
\begin{equation}\label{12}
A(z)=e^{zL_1}z^{-2L_0}g,
\end{equation}
where
\begin{equation}\label{113}
g(a)=e^{-\pi\sqrt{-1}(\tfrac{1}{2}p(a)+\D_a)}\phi(a),\quad a\in V,
\end{equation}
$\D_a$ stands for the   $L_0$--eigenvalue of $a$, and 
$$p(a)=\begin{cases}0\in\Z&\text{ if $a\in \g_{\bar{0}}$},\\ 1 \in\Z&\text{ if $a\in \g_{\bar{1}}$. }
\end{cases}
$$

\begin{definition}\label{unidef} We say that a conformal vertex algebra $V$ is {\sl unitary} if there exists a conjugate linear involution $\phi$ of $V$ and a 
$\phi$-invariant positive definite Hermitian form on $V$.
\end{definition}
\section{The almost compact conjugate linear involution of $\g$}\label{2}
 From now on we let $\g$ be a basic simple finite-dimensional Lie superalgebra such that 
 \begin{equation}\label{goss}
\g_{\bar 0}=\mathfrak s\oplus \g^\natural.
\end{equation}
where  $\mathfrak s\cong sl_2$ and $\g^\natural$ is the centralizer of $\mathfrak s $ in $\g$.

This corresponds to consider $\g$  as in  Table 2 of \cite{KW1}. We will also assume that $\g^\natural$ is not abelian; this condition rules out   $\g=spo(2|m),\ m=0,1,2$. The explicit list is given in the leftmost column of Table 1. Note that $sl(2|1)$ and $osp(4|2)$ are missing there since $sl(2|1)\cong spo(2|2)$ and
$osp(4|2)\cong D(2,1;-\tfrac{1}{2})$.
\par
First, we prove the simple lemma mentioned in the Introduction, which implies the first two conditions of \eqref{prima} imply the third one.
\begin{lemma}\label{31} Let $\g$ be a simple Lie superalgebra with an invariant supersymmetric bilinear form $(\cdot|\cdot)$, let $x\in \g$, and let $\phi$ be a conjugate linear involution of $\g$, such that 
\begin{equation}\label{aux} 
(x|x)\text{ is a non-zero real number, and } \phi(x)=x.
\end{equation} Then 
\begin{equation}\label{forma}\overline{(\phi(a)|\phi(b))}=(a|b),\text{ for all  $a,b\in\g$.}\end{equation}
\end{lemma}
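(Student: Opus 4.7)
The plan is to introduce the auxiliary form
\[
B(a,b) := \overline{(\phi(a)\mid\phi(b))}, \qquad a,b\in\g,
\]
and show that $B$ coincides with $(\cdot\mid\cdot)$. The first thing to verify is that $B$ is a (complex) bilinear form, not merely sesquilinear: since $\phi$ is conjugate linear, for $c\in\C$ we have $(\phi(ca)\mid\phi(b))=\bar c\,(\phi(a)\mid\phi(b))$, and taking the outer complex conjugate restores linearity in $a$; linearity in $b$ is analogous.

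Next I would check that $B$ is invariant and supersymmetric. Invariance follows directly from the fact that $\phi$ is a Lie superalgebra homomorphism and $(\cdot\mid\cdot)$ is invariant:
\[
B([a,b],c)=\overline{([\phi(a),\phi(b)]\mid\phi(c))}=\overline{(\phi(a)\mid[\phi(b),\phi(c)])}=B(a,[b,c]).
\]
Supersymmetry follows because $\phi$ preserves the $\Z/2$-grading, so $p(\phi(a))=p(a)$, and therefore $B(b,a)=(-1)^{p(a)p(b)}B(a,b)$. Nondegeneracy is automatic: if $B(a,\cdot)\equiv 0$ then $(\phi(a)\mid\phi(\cdot))\equiv 0$, and since $\phi$ is bijective and $(\cdot\mid\cdot)$ is nondegenerate, $a=0$.

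Now I would invoke the fact that on a simple Lie superalgebra $\g$ an invariant nondegenerate supersymmetric bilinear form is unique up to a scalar. Hence there exists $c\in\C$ with $B=c\,(\cdot\mid\cdot)$. To pin down $c$, I evaluate both forms at $(x,x)$ using the hypotheses $\phi(x)=x$ and $(x\mid x)\in\R\setminus\{0\}$:
\[
B(x,x)=\overline{(\phi(x)\mid\phi(x))}=\overline{(x\mid x)}=(x\mid x),
\]
so $c(x\mid x)=(x\mid x)$, giving $c=1$. Therefore $B=(\cdot\mid\cdot)$, which is exactly \eqref{forma}.

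The only mildly delicate step is the bilinearity check, because the conjugation in the definition of $B$ interacts nontrivially with the conjugate linearity of $\phi$; once that is in place, the rest of the argument reduces to the well-known uniqueness of invariant forms on a simple Lie superalgebra, and the normalization $c=1$ is immediate from the real-valuedness of $(x\mid x)$.
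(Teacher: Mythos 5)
Your proof is correct and follows essentially the same route as the paper: the paper likewise observes that $\overline{(\phi(a)|\phi(b))}$ is an invariant supersymmetric bilinear form, invokes simplicity of $\g$ to conclude proportionality, and uses \eqref{aux} to fix the constant to $1$. Your write-up just makes explicit the routine verifications (bilinearity despite the conjugations, invariance, supersymmetry, nondegeneracy) that the paper leaves to the reader.
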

\begin{proof} Note that $\overline{(\phi(a)|\phi(b))}$ is an  invariant supersymmetric bilinear form as well, hence it is proportional to $(a|b)$ since $\g$ is simple. Due to \eqref{aux}
these two bilinear forms coincide.
\end{proof}
\vskip10pt
\begin{prop} \label{constructphi} For any $sl_2$-triple $\{e,x,f\}$, such that \eqref{goss} holds for $\mathfrak s=span \{e,x,f\}$, an almost compact involution exists.
\end{prop}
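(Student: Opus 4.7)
My plan is to realize $\phi$ as a composition $\phi = \tau \circ \omega_\Lambda$, where $\omega_\Lambda$ is one of the antilinear involutions from Section 2.2 and $\tau$ is an inner automorphism of $\g$ attached to the $sl_2$-triple $\{e,x,f\}$. First, I would fix a Cartan subalgebra $\h \subset \g_{\bar 0}$ containing $x$, together with a system of positive roots in which the root $\theta$ of $e$ is the highest root; this is possible by the minimality of $(\g,x,f)$ and ensures that a subset of the simple roots of $\g$ restricts to simple roots of $\g^\natural$.

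Second, I would choose $\Lambda = (\lambda_1,\ldots,\lambda_r)$ with the reality constraints of Section 2.2 so that $\omega_\Lambda$ restricted to $\g^\natural$ is the compact conjugate linear involution. Since $\g^\natural$ is reductive, the standard theory of compact real forms guarantees such a $\Lambda$ exists; concretely, it amounts to choosing the signs of the $\lambda_i$ along the simple roots of $\g^\natural$. Formula \eqref{ox} with $\sigma_\theta = \xi_\theta = 1$ then gives $\omega_\Lambda(X_{\pm\theta}) = -\lambda_{\pm\theta}X_{\mp\theta}$, so $\omega_\Lambda(e)\propto f$, $\omega_\Lambda(f)\propto e$, and $\omega_\Lambda(x) = -x$. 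I would then modify $\omega_\Lambda$ by an inner automorphism $\tau = \exp(c\,\ad(e+f))$ for a suitable $c \in \sqrt{-1}\R$, coming from the $SL_2$-subgroup attached to $\mathfrak s$. Because $\g^\natural$ centralizes $\mathfrak s$, $\tau$ acts as the identity on $\g^\natural$, so the compact nature of the restriction is preserved; on $\mathfrak s$, $\tau$ provides the sign flip needed so that $\phi(x) = x$, $\phi(e) = e$, $\phi(f) = f$, once $\lambda_\theta$ has been fine-tuned. The involution property $\phi^2 = 1$ then reduces to $\omega_\Lambda \circ \tau \circ \omega_\Lambda = \tau^{-1}$, which holds because $\omega_\Lambda$ applied to the exponent $c(e+f)$ reverses its sign for $c \in \sqrt{-1}\R$.

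The main obstacle, I expect, is to verify consistency on the odd part $\g_{\bar 1}$. While $\omega_\Lambda$ is defined on all of $\g$ and $\tau$ extends naturally as an inner automorphism, checking that the resulting $\phi$ restricts to a genuine conjugate linear involution requires tracking the action on odd root vectors and showing that the coefficients supplied by the good choice of \cite{GKMP} assemble correctly with the $\tau$-twist. For $\g = D(2,1;a)$, whose structure constants explicitly involve $a$, this is where the condition $a \in \R$ mentioned in the introduction enters: only for real $a$ does the candidate fixed-point set close up as a real Lie superalgebra.
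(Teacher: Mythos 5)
Your proposal is correct in outline, but it takes a route that is mechanically dual to the paper's. Both arguments rest on the same ingredients from Section 2.2 (the involutions $\omega_\Lambda$, the good choice of root vectors, and the compactness criterion of \cite[Proposition 4.5]{GKMP}), and both use the reduction, made explicit at the start of the paper's proof, that it suffices to treat one $sl_2$-triple spanning $\mathfrak s$, any other being obtained by conjugation with an inner automorphism. From there the paper keeps the involution fixed, setting $\phi=\omega_{\Lambda_0}$ for an explicit $\Lambda_0$, and instead rotates the triple: it takes $x=\tfrac{\sqrt{-1}}{2}(X_\theta-X_{-\theta})$ and $e,f=\tfrac12(X_\theta+X_{-\theta}\pm\sqrt{-1}h_\theta)$, a Cayley transform of the standard triple with $x\notin\t$, which is fixed by $\omega_{\Lambda_0}$ on the nose because $\lambda_\theta=-1$ for the distinguished positive system. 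You keep the standard triple $e=X_\theta$, $f=X_{-\theta}$, $x=\tfrac12 h_\theta$ and rotate the involution by $\tau=\exp(c\,\ad(e+f))$; this does work: with $c$ chosen so that $\tau$ acts on $\mathfrak s$ as the $180$-degree rotation (so $\tau(x)=-x$ and $\tau$ swaps $e$ and $f$) and with $\lambda_\theta$ normalized to $-1$, your $\phi=\tau\circ\omega_\Lambda$ fixes $e,x,f$, squares to the identity via $\omega_\Lambda\tau\omega_\Lambda=\tau^{-1}$, and restricts to $\omega_\Lambda$ on $\g^\natural$ since $\tau$ is trivial there; the two constructions are conjugate to each other. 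One correction: the ``main obstacle'' you anticipate in your last paragraph is not actually there. Once the Cartan matrix $((\a_i|\a_j))$ is real, Section 2.2 already guarantees that $\omega_\Lambda$ is a conjugate-linear involution of all of $\g$, odd part included, and composing with the genuine automorphism $\tau$ creates no further consistency issue on $\g_{\bar 1}$; the condition $a\in\R$ for $D(2,1;a)$ enters at that earlier stage (reality of $(\d|\d)=-(1+a)/2$ and $(\e_i|\d)=(1-a)/4$), not in any interaction between the good choice of root vectors and the $\tau$-twist. What your sketch still owes is the explicit verification of the value of $c$ and of the normalization $\lambda_\theta=-1$ (the latter uses that $\theta$ has odd height two in the distinguished system), both of which are asserted rather than computed.
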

\begin{proof}  Choose  a Cartan subalgebra $\mathfrak t$ of $\g_{\bar 0}$. We observe that if we prove the existence of an almost compact involution $\phi$ for
a special choice of $\{e,x,f\}$, then an almost compact involution exists for any choice of the $sl_2$-triple. Indeed, if $\{e',x',f'\}$ is another 
$sl_2$-triple, then there is an inner automorphism $\psi$ of $\mathfrak s$ mapping $\{e,x,f\}$ to $\{e',x',f'\}$, which extends to an inner automorphism of $\g$. Therefore 
$\phi'=\psi\phi\psi^{-1}$ is an almost compact involution for $\{e',x',f'\}$.
The costruction of  $\{e,x,f\}$ and $\phi$ and the verification of properties (i)-(iii) in Definition \ref{ac} will be done in four steps:
\begin{enumerate}
\item make a suitable choice of positive roots for $\g$ with respect to  $\mathfrak t$; 
\item define $\phi$ by specializing \eqref{omega};
\item construct $\{e,f,x\}$ and verify that  $\phi(f)=f, \phi(x)=x$, $\phi(e)=e$;
\item check  that  $\phi$ is a compact involution for  $\g^\natural$;
\end{enumerate}
{\sl Step 1.} We need some preparation. Let 
$\D^\natural$ be the set of roots  of  $\g^\natural$ with respect to the Cartan subalgebra $\mathfrak t\cap \g^\natural$.
Let $\{\pm\theta\}$ be the $\mathfrak t\cap \mathfrak s$-roots of $\mathfrak s$. Then $R_{\bar 0}=\{\pm\theta\}\cup\D^\natural$ is the set of roots of $\g_{\bar 0}$ with respect to $\t$. 

Let $R$ be the set of roots of $\g$ with respect to $\t$, let  $R^+$ be subset of positive roots whose corresponding set of simple roots $S=\{\a_1,\ldots,\a_r\}$ is displayed in Table 1. 
\begin{table}
{\scriptsize
\begin{tabular}{c | c| c |c }
$\g$&$S$&\text{ $(\cdot |\cdot)$} & $\theta$\\
\hline
$psl(2|2)$&
$\{\e_1-\d_1,\d_1-\d_2,\d_2-\e_2\}$&$(\e_i|\e_j)=\d_{i,j}=-(\d_i|\d_j)$ & $\e_1-\e_2$\\
& &$(\e_i|\d_j)=0$
\\\hline
$sl(2|m), m>2$&
$\{\e_1-\d_1,\d_1-\d_2,\ldots,\d_m-\e_2\}$&$(\e_i|\e_j)=\d_{i,j}=-(\d_i|\d_j)$ & $\e_1-\e_2$\\
& &$(\e_i|\d_j)=0$
\\\hline
$osp(4|m), m>2$&$\{\e_1-\e_2,\e_2-\d_1,\d_1-\d_2,\ldots,\d_{m-1}-\d_m,2\d_m\}$&$(\e_i|\e_j)=\d_{i,j}=-(\d_i|\d_j)$ & $\e_1+\e_2$\\
& &$(\e_i|\d_j)=0$\\\hline
$spo(2|2m+1), m\ge1$&
$\{\d-\e_1,\e_1-\e_2,\ldots,\e_{m-1}-\e_m,\e_m\}$&$(\e_i|\e_j)=-\tfrac{1}{2}\d_{i,j}, (\d|\d)=\tfrac{1}{2}$& $2\d$
\\\hline
$spo(2|2m), m\ge 3$&
$\{\d-\e_1,\e_1-\e_2,\ldots,\e_{m-1}-\e_m,\e_{m-1}+\e_m\}$&$(\e_i|\e_j)=-\tfrac{1}{2}\d_{i,j}, (\d|\d)=\tfrac{1}{2}$& $2\d$
\\\hline
$D(2,1;a)$&$\{\e_1-\e_2, \e_2-\delta, 2\delta\}$& $(\e_i|\e_j)=\delta_{i,j}, (\delta|\delta)=-(1+a)/2$ & $\e_1+\e_2$\\
 & & $(\e_1|\delta)=(\e_2|\delta)=(1-a)/4$\\\hline
$F(4)$&$\{\tfrac{1}{2}(\d-\e_1-\e_2-\e_3), \e_3,\e_2-\e_3,\e_1-\e_2\}$&$(\e_i|\e_j)=-\tfrac{2}{3}\d_{i,j}, (\d|\d)=2$& $\d$\\
& &$(\e_i|\d)=0$
\\\hline
$G(3)$&$\{\d+\e_3, \e_1,\e_2-\e_1\}$&$(\e_i|\e_j)=\frac{1-3\d_{i,j}}{4}, (\d|\d)=\frac{1}{2}$& $2\d$\\
& &$(\e_i|\d)=0,\ \e_1+\e_2+\e_3=0$
\end{tabular}
}

\caption{}
\end{table}
Note that   $\theta$ is the  highest root of 	$R$.
\vskip5pt
\noindent{\sl Step 2.} Define 
\begin{equation}\L_0=\{\l_1,\ldots,\l_r\},\quad\l_i=\begin{cases}- sgn (\a_i|\a_i)\quad&\text{if $\a_i$ is  even,}\\
\sqrt{-1}\quad&\text{if $\a_i$ is  odd.}\end{cases}\label{lambda0}\end{equation}
 Set 
  $\phi=\omega_{\L_0}$ (see \eqref{omega}).

 \vskip5pt
\noindent{\sl Step 3.} Consider a good choice of root vectors $X_\a$ for $\L_0$.
Set 
\begin{equation}\label{efx} x=\tfrac{\sqrt{-1}}{2}(X_\theta-X_{-\theta}), \quad 
e=\tfrac{1}{2}(X_\theta+X_{-\theta}+\sqrt{-1}h_\theta), \quad f=\tfrac{1}{2}(X_\theta+X_{-\theta}-\sqrt{-1}h_\theta).\end{equation}
If  $\theta=\sum_{i=1}^{r}m_i\a_i$, then, by our special choice of $\Dp$, we have  either $m_i=2$ for exactly one odd simple root $\a_i$, or $m_i=m_j=1$ for exactly two odd distinct simple roots $\a_i,\a_j$ (this corresponds to the fact that $R^+$ is distinguished, in the terminology of \cite{GKMP}). By \eqref{ox} we have
\begin{equation}\label{phixtheta}\phi(X_\theta)=- (\sqrt-1)^2 X_{-\theta}=X_{-\theta}. 
\end{equation}
Since $h_\theta=\sum_{i=1}^r m_ih_{\a_i}$ and $\phi(h_{\a_i})=-h_{\a_i}$, it is clear from \eqref{efx}
 that $\phi$ fixes $e, f, x$. One checks directly that $\{e,f,x\}$ is an $sl_2$-triple.

\vskip5pt
\noindent{\sl Step 4.} Endow $\g$ with the $\ganz$-grading
\begin{equation}\label{grading}\g=\bigoplus_{i\in\ganz} \mathfrak q_i\end{equation}
which assigns   degree $0$ to $h\in\t$ and to $e_i$ and $f_i$ if $\a_i$ is even,
and degree $1$ to $e_i$ and degree $-1$ to $f_i$, if $\a_i$ is odd.

A direct check on Table 1 shows that $\mathfrak q_0=\g^\natural$.
Recall from \cite[Proposition 4.5]{GKMP} that the fixed points of $\phi$ in $\mathfrak q_0$ are a compact form of $\q_0$ if and only  if 
$\l_i(\a_i|\a_i)<0$ for all $\a_i\in S\setminus S_1$. Step 4 now follows from \eqref{lambda0}. \par
\end{proof}
\section{Explicit expressions for almost compact forms}
 In this section we exhibit explicitly  an almost compact involution 
$\phi$ in each case and discuss its uniqueness. If $\phi$ is an almost compact involution of $\g$,  we denote by $\g^{ac}$ the corresponding real form  (the fixed point set of $\phi$).
We can define $\g^{ac}$ by specifying a real form $\g^{ac}_{\bar 0}$ of $\g_{\bar 0}$ and a real form $\g^{ac}_{\bar 1}$ of  $\g_{\bar 1}$.

(1)  $\g=spo(2|m)$. Then $\g_{\bar 0}=sl_2\oplus so_m$ and $\g_{\bar 1}=\C^2\otimes \C^m$ as $\g_{\bar 0}$-module. We set 
$$\g^{ac}_{\bar 0}=sl_2(\mathbb R)\oplus so_m(\mathbb R),\quad \g^{ac}_{\bar 1}=\R^2\otimes \R^m.
$$
Explicitly, let $B$ be a non-degenerate $\R$-valued bilinear form of the superspace $\R^{2|m}$ with matrix
$\left(\begin{array}{c c|c} 0 &1 &0\\-1 & 0& 0\\\hline  0& 0& I_m\end{array}\right)$. Then for $\g=spo(2|m)$ we have:
$$\g^{ac}=\{A \in sl(m|n;\R)\mid B(Au,v)+(-1)^{p(A)p(u)}B(u,Av)=0\}.
$$

(2) $\g=psl(2|2)$. 
Let $H$ be a $\C$-valued non-degenerate sesquilinear form on the superspace $\C^{2|2}$ whose matrix is $diag(\I,-\I,1,1)$. Set
$$\tilde\g^{ac}=\{A \in sl(2|2;\C)\mid H(Au,v)+(-1)^{p(A)p(u)}H(u,Av)=0\}.
$$
Then 
$$\g^{ac}=\tilde\g^{ac}/\R \sqrt{-1} I.$$
Explicitly, we have 
 $\g_{\bar 0}=sl_2\oplus sl_2$ and $\g_{\bar 1}=\left\{\left(\begin{array}{c|c} 0 & B \\\hline C & 0\end{array}\right)\mid B,C\in M_{2,2}(\C)\right\}$ as a $\g_{\bar 0}$-module. Then 
\begin{align*}\tilde \g^{ac}_{\bar 0}&=\left\{\left(\begin{array}{c|c} A & 0 \\\hline 0& D\end{array}\right)\mid  A\in su(1,1), D\in su_2\right\},\\
\tilde \g^{ac}_{\bar 1}&=\left\{\left(\begin{array}{c c |c}0 & 0 & u \\ 0 & 0  & v\\\hline
\I \bar u^t &-\I \bar v^t & 0\end{array}\right)\mid  u, v\in \C^2\right\}.
\end{align*}

(3)  $\g=D(2,1;a)$. Then $\g_{\bar 0}=sl_2\oplus sl_2\oplus sl_2= so(4,\C)\oplus sl_2$ and $\g_{\bar 1}=\C^2\otimes \C^2\otimes \C^2=
\C^4\otimes \C^2$ as $\g_{\bar 0}$-module. We set 
$$\g^{ac}_{\bar 0}=so(4,\R)\oplus span_\R\{e,f,x\},\quad \g^{ac}_{\bar 1}=\R^4\otimes \R^2.
$$
To get an explicit realization, consider
the contact Lie superalgebra (see \cite{Kacsuper} for more details)  $$K(1, 4)=\C[t,\xi_1,\xi_2,\xi_3,\xi_4]$$ where $t$ is an even variable and $\xi_i, 1\leq i\leq 4$, are  odd variables. Introduce on the associative superalgebra $K(1, 4)$ a $\Z$-grading by letting 
$$\deg' t=2,\quad \deg'\xi_i=1,$$
and the bracket
$$\{f,g\}=(2-\sum_{i=1}^4\xi_i\partial_i)f\partial_t g-\partial_t f(2-\sum_{i=1}^4\xi_i\partial_i)g+\sum_{i=1}^4
(-1)^{p(f)}\partial _if\partial_ig,
$$
where $\partial_i=\partial_{\xi_i}$. This is a $\Z$-graded Lie superalgebra with compatible grading 
$\deg  f=\deg'f-2$. We have
$$K(1,4)=\bigoplus_{j\geq -2} K(1,4)_j,$$
where
\begin{align*}&K(1,4)_{-2}=\C 1,  &&K(1,4)_{-1}=span_\C(\xi_i\mid 1\leq i \leq 4), \\&
K(1,4)_0=span_\C(\xi_i\xi_j, t\mid 1\leq i,j \leq 4),
&&K(1,4)_1=\g_1'\oplus \g_1'',\text{ where}\\&\g_1'=span_\C(t\xi_i\mid 1\leq i\leq 4), &&\g_1''=span_\C(\xi_i\xi_j\xi_k\mid 1\leq i,j,k \leq 4).
\end{align*}
Note that $span_\C(\xi_i\xi_j\mid 1\leq i,j \leq 4)=\Lambda^2\C^4\cong so(4,\C)$,  that $\g'_1$ is isomorphic to the standard representation $\C^4$ of $so(4,\C)$ and that $\g''_1$ is isomorphic to
$\Lambda^3\C^4$, so that $K(1,4)_1=\C^4\oplus \C^4$ as $so(4,\C)$-module. Also notice that $\{\g_1',\g_1'\}=\C t^2, \{\g_1'',\g_1''\}=0$.
Fix now a copy $\tilde \g_b$ of  an $so(4,\C)$-module $\C^4$  in $\C^4\oplus \C^4$, depending on a constant $b\in \R$,  as follows. Set, for $1\leq i\leq 4$,
$$a_i=t\xi_i+ b \hat \xi_i,\text{ where }  \hat \xi_i=(-1)^{i+1}\prod_{j\ne i} \xi_j,$$
and define
$$\tilde \g_b=\sum_{i=1}^4\C a_i.$$
 Let $b\in \R$. Note that, setting $\xi=\xi_1\xi_2\xi_3\xi_4$, we have 
\begin{align*}
\{t\xi_i+ b \hat \xi_i,t\xi_j+ b \hat \xi_j\}&=\d_{ij}(-t^2+2b\xi).
\end{align*}
Hence, if we set
$$e=-t^2+2b\xi,\quad f=-1,\quad x=t/2,$$
then $\{e,x,f\}$ is an $sl_2$-triple. Set 
$$\g^{ac}=\R\cdot 1\oplus \left(\sum_{i=1}^4\R \xi_i\right)\oplus\left( \sum_{i,j=1}^4\R \xi_i\xi_j\oplus \R\tfrac{t}{2}\right)\oplus\left( \sum_{i=1}^4\R a_i\right)\oplus \R(-t^2+2b\xi). $$
Then $\g^{ac}$ is an almost compact form  of  $D(2,1;\frac{1+b}{1-b})$. To prove this, it suffices to calculate the  Cartan matrix for a choice of Chevalley generators of the complexification of $\g^{ac}$.
Fix a Cartan subalgebra in $\g^\natural = so(4,\C)$ as the span of  $v_2=-\I\xi_1\xi_2, v_3=-\I\xi_3\xi_4$. Set  $v_1=t$; then $\{v_1,v_2,v_3\}$ is a basis of a  Cartan subalgebra of $\g$. Let $\{\e_1,\e_2,\e_3\}$ the   dual basis  to $\{v_1,v_2,v_3\}$. One can choose $\{\a_1=\e_2-\e_1,\a_2=\e_1-\e_3,\a_3=\e_1+\e_3\}$ as a set of simple roots. The associated Chevalley generators are 
{\small
\begin{align*}
 &e_1=-\I a_1+a_2 &&\!\!e_2=\xi_1\xi_3+\xi_2\xi_4+\I(\xi_1\xi_4-\xi_2\xi_3)&&e_3=\xi_1\xi_3-\xi_2\xi_4-\I(\xi_1\xi_4+\xi_2\xi_3)\\
 &f_1=\I \xi_1+\xi_2  &&\!\!f_2=\xi_1\xi_3+\xi_2\xi_4-\I(\xi_1\xi_4-\xi_2\xi_3)&&f_3=\xi_1\xi_3-\xi_2\xi_4+\I(\xi_1\xi_4+\xi_2\xi_3)\\
 &h_1=-2v_1+2v_2+2b\, v_3 &&\!\!h_2=4v_1-4v_3 &&h_3=4v_1+4v_3
\end{align*}
}
and the corresponding Cartan matrix, normalized as in \cite{Kacsuper},
is 
$\begin{pmatrix} 0 &1 &\frac{1+b}{1-b}\\ -1&2&0\\-1 & 0 &2\end{pmatrix}$. Hence $a=\frac{1+b}{1-b}$ and therefore all $a\neq -1$ occur in this 
construction. Since this subalgebra is $17$-dimensional, it is isomorphic to $D(2,1;a)$.
\begin{remark} 
Note that $a=0$ for $b=-1$. In this case, $D(2,1;0)$ contains a $11$-dimensional   solvable  ideal generated by $f_1$, which is spanned by
$h_1$ and the root vectors relative to roots having $\a_1$ in their support.
If  we replace $a_i$ by $a_i/b$ and $h_1$ by $h_1/b$, and tend $b$ to  $+\infty$, we  recover also the Lie superalgebra of derivations of $psl(2|2)$, and its almost compact real form.
\end{remark}
(4)  $\g=G(3)$. Then $\g_{\bar 0}=sl_2\oplus G_2$ and $\g_{\bar 1}=\C^2\otimes L_{\min}$, where  $L_{\min}$ is the 7-dimensional over $\C$ irreducible representation  of $G_2$, and we let 
$$\g^{ac}_{\bar 0}=sl_2(\mathbb R)\oplus G_{2,0},\quad \g^{ac}_{\bar 1}=\R^2\otimes  L_{\min,0}.
$$
where $G_{2,0}$ is the real compact form of $G_2$ and $L_{\min,0}$ is the 7-dimensional  over $\R$ irreducible  representation  of $G_{2,0}$

(5)  $\g=F(4)$. Then $\g_{\bar 0}=sl_2\oplus so_7$ and $\g_{\bar 1}=\C^2\otimes spin_7$, and we let
$$\g^{ac}_{\bar 0}=sl_2(\mathbb R)\oplus so_7(\R),\quad \g^{ac}_{\bar 1}=\R^2\otimes  spin (\R^7).
$$

It is proved in \cite[Proposition 5.3.2]{Kacsuper} that in both cases (4) and (5) $\g^{ac}=\g^{ac}_{\bar 0}\oplus \g^{ac}_{\bar 1}$ is an almost compact form of $\g$.
\subsection{Uniqueness of the almost compact involution} 
 \begin{proposition}  
An almost compact involution  is uniquely determined up to a sign by its action on $\g_0$, provided that the $\g_0$-module $\g_{1/2}$ is irreducible.\end{proposition}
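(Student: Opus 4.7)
The plan is to study the composition of two almost compact involutions. Let $\phi_1,\phi_2$ be almost compact involutions of $\g$ with $\phi_1|_{\g_0}=\phi_2|_{\g_0}$, and set $\psi:=\phi_1\phi_2$. As the composition of two conjugate linear maps, $\psi$ is a complex linear Lie superalgebra automorphism. Both $\phi_i$ fix $x$, so they preserve the $\ad x$-grading, and so does $\psi$. The agreement on $\g_0$, together with $\phi_2^2=\mathrm{id}$, gives $\psi|_{\g_0}=\mathrm{id}$; since $\phi_1(e)=\phi_2(e)=e$ and $\phi_1(f)=\phi_2(f)=f$, one also has $\psi|_{\g_{\pm 1}}=\mathrm{id}$. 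Because $\psi$ is trivial on $\g_0$, it commutes with $\ad\g_0$, so each restriction $\psi|_{\g_j}$ is a $\g_0$-module endomorphism.

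Next I would invoke Schur's lemma. The invariant bilinear form pairs $\g_{1/2}$ non-degenerately with $\g_{-1/2}$ as $\g_0$-modules, so $\g_{-1/2}$ is irreducible as soon as $\g_{1/2}$ is. Schur then yields scalars $c,c'\in\C$ with $\psi|_{\g_{1/2}}=c\cdot\mathrm{id}$ and $\psi|_{\g_{-1/2}}=c'\cdot\mathrm{id}$. To pin them down, I would exploit two bracket constraints. First, pick $u\in\g_{1/2}$ and $v\in\g_{-1/2}$ with $(u|v)\neq 0$; invariance gives $([u,v]|x)=\tfrac12(u|v)\neq 0$, so $[u,v]\in\g_0$ is non-zero. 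Applying $\psi$ and using $\psi|_{\g_0}=\mathrm{id}$ yields $cc'=1$. Second, a short Jacobi computation, using $[e,\g_{1/2}]\subset\g_{3/2}=0$, shows that $[e,[u,v]]\in[\g_{1/2},\g_{1/2}]$; since $[e,x]=-e\neq 0$, the space $[\g_{1/2},\g_{1/2}]$ is non-zero, and applying $\psi$ to an element of $\g_1=\C e$ obtained in this way gives $c^2=1$. Hence $c=c'=\pm 1$.

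The two possibilities give either $\psi=\mathrm{id}$, so $\phi_1=\phi_2$, or $\psi$ equal to the grading automorphism that is $-1$ on $\g_{\pm 1/2}$ and the identity on $\g_0\oplus\g_{\pm 1}$, so $\phi_1$ and $\phi_2$ coincide on $\g_0\oplus\g_{\pm 1}$ and differ by a sign on $\g_{\pm 1/2}$. This is exactly the ``up to a sign'' ambiguity asserted in the proposition. I do not expect any serious obstacle: the argument is a clean combination of Schur's lemma and the bracket relations, with the only mildly delicate point being the non-vanishing of $[\g_{1/2},\g_{1/2}]$, handled by the Jacobi identity above.
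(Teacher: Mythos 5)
Your proposal is correct and follows essentially the same route as the paper: form the ratio $\psi=\phi_1\phi_2$ of the two involutions, note it is the identity on $\g_0$, apply Schur's lemma to get a scalar on $\g_{\pm 1/2}$, and pin the scalar to $\pm 1$ using that $\psi$ fixes $\g_{\pm 1}$ and that $[\g_{\pm 1/2},\g_{\pm 1/2}]$ is non-zero (the paper phrases this via $\phi(f)=f$, you via $e$; it is the same argument). Your write-up just supplies the details the paper leaves implicit.
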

\begin{proof} 
If there are two different extensions of the compact involution, then their ratio $\psi$, say,  is 
identical on $\g_0$, hence, by Schur's lemma, $\psi$ acts as a scalar on 
$\g_{-1/2}$. Since $\phi(f)=f$, we conclude that this scalar is $\pm 1$.
\end{proof}
It remains to discuss the cases $\g=sl(2|m),\,m\geq 3$, and  $psl(2|2)$, since in all other cases of Table 1 the $\g_0$-module $\g_{1/2}$ is irreducible. In this cases $\g$ is of type I, that is $\g_{\bar 1}=\g_{\bar 1}^+\oplus \g_{\bar 1}^-$ where  $\g_{\bar 1}^{\pm}$ are contragredient 
irreducible $\g_{\bar 0}$-modules and $[\g_{\bar 1}^{\pm},\g_{\bar 1}^{\pm}]=0$.
Let $\d_\l$ be the linear  map on $\g$ defined by setting 
\begin{equation}\label{deltal}
{\d_\l}_{|\g_{\bar 0}}=Id,\quad {\d_\l}_{|\g_{\bar 1}^+}=\l Id,\quad{\d_\l}_{|\g_{\bar 1}^-}=\l^{-1}Id.\end{equation} 
Then $\d_\l$ is an automorphism of $\g$ for any $\l\in\C$.   Suppose that $\phi'$ is another conjugate almost compact linear involution such that $\phi'_{|\g_{\bar 0}}=\phi$. Then
$\phi'= \phi\circ \gamma$ with $\gamma$ an automorphism of $\phi$ such that $\gamma_{|\g_{\bar 0}}=Id.$ If $\g=sl(2|m)$, by \cite[Lemmas 1 and 2]{SERG}, we have  $\gamma=\d_\l$.
Since $\phi(\g_{\bar 1}^+)=\g_{\bar 1}^-$ and $(\phi')^2=Id$ we have that  $\l\in\R$.
If $\g=psl(2|2)$, then $\gamma$ belongs to a three-parameter family of  automorphisms explicitly described in  \cite[\S 4.6]{GKMP}, and contained in $SL(2,\C)$. This $SL(2,\C)$ is the group of automorphisms of $\g$ corresponding to the Lie algebra $sl_2$ of outer derivations of $\g$.
\par

\begin{remark} Note that if $\phi$ is an almost compact involution, then 
$$\tilde \phi(a)=(-1)^{2j}\phi(a),\ a\in\g_{j}$$
is again an almost compact involution.
\end{remark}
\section{The bilinear form $\langle \cdot ,\cdot \rangle$ on $\g_{-1/2}$}
Consider  the following symmetric bilinear  form on $\g_{-1/2}$ 
\begin{equation}\label{sesq}\langle u,v\rangle=(e|[u,v]).\end{equation}
We want to prove the following 
\begin{proposition}\label{fh} We can choose an almost  compact involution such that $\langle \cdot ,\cdot\rangle$ is positive definite on $\mathfrak r=\{u\in\g_{-1/2}\mid  \phi(u)=-u\}.$
\end{proposition}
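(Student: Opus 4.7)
The plan is to reformulate positivity of $\langle\cdot,\cdot\rangle|_{\mathfrak r}$ as positive definiteness of a Hermitian form $H$ on the complex vector space $\g_{-1/2}$ which is invariant under the compact real form $\g^{\natural,ac}:=\g^\natural\cap\g^{ac}$ of $\g^\natural$, to reduce the question to a finite sign choice by a Schur-type argument, and then to fix the sign using the twisted involution $\tilde\phi$ from the Remark at the end of Section~3. Concretely, I would set $H(u,v):=-(e|[u,\phi(v)])$ for $u,v\in\g_{-1/2}$. Using $\phi(e)=e$, $\phi^2=\mathrm{id}$, and $\overline{(\phi(a)|\phi(b))}=(a|b)$, a short calculation shows that $H$ is Hermitian. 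From the Jacobi identity, the invariance of $(\cdot|\cdot)$, and $[e,\g^\natural]=0$, one sees that $\langle\cdot,\cdot\rangle$ is $\g^\natural$-invariant, and consequently $H$ is $\g^{\natural,ac}$-invariant. Since $\phi(u)=-u$ for $u\in\mathfrak r$ gives $H(u,u)=\langle u,u\rangle\in\R$, and since $\g_{-1/2}=\mathfrak r\oplus i\mathfrak r$ as a real direct sum, $H$ is positive definite on the complex space $\g_{-1/2}$ if and only if $\langle\cdot,\cdot\rangle$ is positive definite on $\mathfrak r$.

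Next, compactness of $\g^{\natural,ac}$ forces $H$, a non-degenerate invariant Hermitian form, to have a definite sign on each $\g^\natural$-irreducible summand of $\g_{-1/2}$. For $\g\in\{spo(2|m)\,(m\ge 3),\ osp(4|m),\ D(2,1;a),\ F(4),\ G(3)\}$, the module $\g_{-1/2}$ is $\g^\natural$-irreducible (as read off the descriptions of Section~4), so $H$ carries a single sign $\epsilon=\pm 1$. In the type I cases $\g=sl(2|m)$ and $\g=psl(2|2)$, the type I $\Z$-grading yields $\g_{-1/2}=V^+\oplus V^-$ with $V^\pm:=\g_{\bar 1}^{\pm}\cap\g_{-1/2}$, with $\phi$ exchanging $V^+$ and $V^-$ and with $V^+$ still $\g^\natural$-irreducible. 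The abelianness $[V^\pm,V^\pm]=0$ of the type I subspaces gives $\langle V^\pm,V^\pm\rangle=0$, so $\mathfrak r=\{v-\phi(v):v\in V^+\}$; a direct bracket computation then produces $\langle v-\phi(v),v-\phi(v)\rangle=2H|_{V^+}(v,v)$, and again there is a single sign $\epsilon$ to determine.

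Finally, if $\epsilon=+1$ the chosen $\phi$ already works. If $\epsilon=-1$, I would replace $\phi$ by the twisted almost compact involution $\tilde\phi$ from the Remark in Section~3, which acts as $-\phi$ on $\g_{-1/2}$; then $\mathfrak r$ is replaced by $i\mathfrak r$, and the identity $\langle iu,iu\rangle=-\langle u,u\rangle$ flips the sign to $+$. The main technical point will be the identification $\mathfrak r\cong V^+$ in the type I cases, which depends on the vanishing $[V^\pm,V^\pm]=0$ and on the fact that the conjugate-linear involution $\phi$ exchanges the two type I summands of $\g_{-1/2}$; once this is in place, everything reduces to the same sign-flipping argument used in the other cases.
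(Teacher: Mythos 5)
Your argument is correct, but it proves the proposition by a genuinely different route than the paper. The paper works entirely with a good choice of root vectors: it introduces the structure constants $N_{\alpha,\beta}$, proves $N_{-\theta,\alpha}^2=1$ and $N_{-\theta,\alpha}N_{\theta,\alpha-\theta}=1$, builds the explicit vectors $u_\alpha=X_\alpha+\sqrt{-1}N_{-\theta,\alpha}X_{\alpha-\theta}$ and $v_\alpha=\tfrac{1}{2}(u_\alpha-\phi(u_\alpha))+\tfrac{\sqrt{-1}}{2}(u_\alpha+\phi(u_\alpha))$, and computes $\langle v_\alpha,v_\beta\rangle=2\delta_{\alpha,\beta}$ directly, so that the $v_\alpha$ form an explicit orthogonal basis of $\mathfrak{r}$ and the specific involution $\phi=\omega_{\Lambda_0}$ of Proposition \ref{constructphi} is shown to work. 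You instead package $\langle\cdot,\cdot\rangle$ into the Hermitian form $H(u,v)=-(e|[u,\phi(v)])$, observe its $\g^{\natural}$-invariance (using $[e,\g^\natural]=0$) and non-degeneracy, invoke Schur's lemma for the compact form to get a single sign on the relevant irreducible constituent ($\g_{-1/2}$ itself, or $V^+$ after the type I reduction via $[V^\pm,V^\pm]=0$ and $\mathfrak{r}=\{v-\phi(v):v\in V^+\}$), and correct a negative sign by passing to the twisted involution $\tilde\phi$, which acts as $-\phi$ on $\g_{-1/2}$ and replaces $\mathfrak{r}$ by $\sqrt{-1}\,\mathfrak{r}$. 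Your checks (Hermitianity of $H$, reality of $\langle\cdot,\cdot\rangle$ on $\mathfrak{r}$, $H$-orthogonality of $V^+$ and $V^-$, the identity $\langle v-\phi(v),v-\phi(v)\rangle=2H(v,v)$) all hold, and since the proposition only asserts that \emph{some} almost compact involution works, the indeterminacy between $\phi$ and $\tilde\phi$ is harmless. The trade-off: your proof is softer and avoids all structure-constant manipulations, but it leans on the case-by-case irreducibility of $\g_{\pm 1/2}$ as a $\g^\natural$-module and a separate type I analysis, and it does not identify which involution succeeds; the paper's computation is heavier but uniform, and yields the concrete orthogonal basis $\{v_\alpha\}$ of $\mathfrak{r}$ together with the knowledge that the involution constructed in Proposition \ref{constructphi} is already the right one.
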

 The proof requires a detailed analysis of the action of an almost compact involution on $\g_{-1/2}$. Define structure constants  $N_{\a,\beta}$  for a good choice of  root vectors (see Subsection \ref{222}) by the relation $$[X_\a,X_\beta]=N_{\a,\beta}X_{\a+\beta}.$$
Observe that $\{X_{-\theta}, X_{\theta}, \tfrac{1}{2}h_\theta\}$ is a $sl_2$-triple in $\mathfrak s$. Let 
$$\g=\C X_{\theta} \oplus \tilde \g_{1/2} \oplus \tilde \g_0 \oplus \tilde \g_{-1/2} \oplus \C X_{-\theta}$$
be the decomposition into $ad\,\tfrac{1}{2}h_\theta$ eigenspaces. By $sl_2$ theory, $ad\,X_{\pm \theta}: \tilde\g_{\mp1/2}\to \tilde\g_{\pm 1/2}$ is an isomorphism 
of $\g^\natural$-modules. Moreover,  by our choice of $R^+$, the roots of $\tilde \g_{-1/2}$ (resp.  $\tilde \g_{1/2}$)  are precisely the negative (resp. positive) odd roots.  In particular, the map $\a\mapsto -\theta+\a$ defines a bijection between the positive and negative odd roots.
We shall need the following properties.
\begin{lemma} For a positive odd root $\a$ we have
\begin{align}
\label{rss} &N_{-\theta,\a}N_{\th,\a-\th}=1,\\
&\label{Npm1} N_{-\theta,\a}^2=1.
\end{align}
In particular $N_{\theta,\a}$ is real. \end{lemma}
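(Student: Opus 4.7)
The plan is to prove each identity separately.

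For \eqref{rss} I would apply the super-Jacobi identity to $[X_\theta,[X_{-\theta},X_\alpha]]$. Since the $\ad(\tfrac{1}{2}h_\theta)$-eigenvalues on $\g$ in the minimal gradation lie in $\{-1,-\tfrac{1}{2},0,\tfrac{1}{2},1\}$, $\alpha+\theta$ is not a root and so $[X_\theta,X_\alpha]=0$. Combining this with $[X_\theta,X_{-\theta}]=h_\theta$ and $(\theta|\alpha)=1$ (which holds because $X_\alpha\in\tilde\g_{1/2}$, i.e.\ has $\ad(h_\theta/2)$-eigenvalue $\tfrac{1}{2}$), one gets
\[
N_{-\theta,\alpha}N_{\theta,\alpha-\theta}X_\alpha=[X_\theta,[X_{-\theta},X_\alpha]]=[[X_\theta,X_{-\theta}],X_\alpha]=[h_\theta,X_\alpha]=X_\alpha,
\]
which is exactly \eqref{rss}.

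For \eqref{Npm1} my plan splits into showing first that $|N_{-\theta,\alpha}|=1$ and then that $N_{-\theta,\alpha}\in\R$. To get the modulus, apply $\omega_{\Lambda_0}$ to both sides of $[X_{-\theta},X_\alpha]=N_{-\theta,\alpha}X_{\alpha-\theta}$. Using $\omega_{\Lambda_0}(X_\theta)=X_{-\theta}$ (which follows from \eqref{phixtheta}, as $\lambda_\theta=(-\sqrt{-1})^2=-1$ since the total odd content of $\theta$ in the distinguished base is $2$), together with \eqref{ox}-\eqref{lambdaalfa} and the consequent identity $\lambda_{\alpha-\theta}=-\lambda_\alpha$, a direct computation yields $N_{\theta,-\alpha}=\overline{N_{-\theta,\alpha}}$. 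Since $\theta-\alpha$ is again a positive odd root (one checks $(\theta|\theta-\alpha)=1$), applying \eqref{rss} to it gives $N_{-\theta,\theta-\alpha}N_{\theta,-\alpha}=1$. A short invariance-of-form manipulation using $(X_\mu|X_{-\mu})=\sigma_\mu$ and super-antisymmetry produces the symmetry $N_{-\theta,\alpha}=N_{-\theta,\theta-\alpha}$. Combining these three relations yields $N_{-\theta,\alpha}\overline{N_{-\theta,\alpha}}=1$. To upgrade this modulus-one conclusion to $N_{-\theta,\alpha}\in\{\pm 1\}$, I would invoke the fact that the good choice of root vectors from \cite{GKMP} (built on \cite{YK}) is essentially a Chevalley-type basis of $\g$; since the $\theta$-string through $\alpha$ has length $2$ (with $\alpha-\theta\in\Delta$ but $\alpha+\theta\notin\Delta$), the analogue of the classical Chevalley formula then forces $N_{-\theta,\alpha}\in\Z$, hence $\pm 1$. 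From \eqref{rss} one also gets $N_{\theta,\alpha-\theta}=1/N_{-\theta,\alpha}=\pm 1$, and the reality statement at the end of the lemma follows.

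The hard part is showing $N_{-\theta,\alpha}\in\R$. The $\omega_{\Lambda_0}$-invariance bookkeeping is purely formal and only pins down the modulus; upgrading it to reality is sensitive to the precise normalization baked into the good choice, which is why I would lean on the Chevalley-type construction of \cite{YK}. A self-contained fallback would be a case-by-case verification across the Lie superalgebras of the list \eqref{ssuper} using the explicit realizations from Section 4, which is mechanical but not illuminating.
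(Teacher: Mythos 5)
Your argument for \eqref{rss} is correct and is actually more self-contained than the paper's, which simply cites \cite[Lemma 4.3]{GKMP}: since $\theta+\alpha$ has $\ad(\tfrac12 h_\theta)$-eigenvalue $\tfrac32$ it is not a root, so the super-Jacobi identity collapses to $N_{-\theta,\alpha}N_{\theta,\alpha-\theta}X_\alpha=[h_\theta,X_\alpha]=(\theta|\alpha)X_\alpha=X_\alpha$. Your bookkeeping with $\omega_{\Lambda_0}$ (using $\lambda_\theta=-1$, hence $\lambda_{\alpha-\theta}=-\lambda_\alpha$, together with the form-invariance identity $N_{-\theta,\alpha}=N_{-\theta,\theta-\alpha}$) is also sound and correctly yields $|N_{-\theta,\alpha}|=1$.

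However, for \eqref{Npm1} this is where your proof stops short of the statement: the relations you extract from $\omega_{\Lambda_0}$-equivariance and invariance of the form are insensitive to a unimodular rescaling of the odd root vectors, so they can only ever pin down the modulus, never the phase. The reality of $N_{-\theta,\alpha}$ is a property of the particular normalization built into the ``good choice'' of root vectors, and your proposal only gestures at it (``the analogue of the classical Chevalley formula forces $N_{-\theta,\alpha}\in\Z$'') without verifying that such a formula holds for this basis. That unverified assertion is precisely the content of the paper's proof: it invokes the explicit formula \cite[(4.8)]{GKMP}, which, combined with the observation that the $-\theta$-string through $\alpha$ has length $1$, gives $N_{-\theta,\alpha}^2=1$ outright --- making the modulus computation superfluous. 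So the gap is real but localized: you have correctly identified the one fact you cannot derive formally, and the missing ingredient is exactly the string-length formula for the structure constants of the good basis from \cite{GKMP}/\cite{YK}; either cite it as the paper does, or carry out the case-by-case fallback you mention.
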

\begin{proof}
Relation \eqref{rss} is proven in \cite[Lemma 4.3]{GKMP}. Equation \eqref{Npm1} follows from \cite[(4.8)]{GKMP}, noting that the $-\theta$-string through $\a$ has length $1$.
 \end{proof} 

Arguing as in  Proposition \ref{constructphi}, we can assume in the proof of Proposition \ref{fh} that   $\{e,x,f\}$ is the $sl_2$-triple defined in \eqref{efx}; $ad\, x$ defines on $\g$ a  minimal grading
\begin{equation}\label{mg}
\g=\C f\oplus\g_{-1/2}\oplus \g_0\oplus \g_{1/2}\oplus\C e. 
\end{equation}
Set, for an odd root $\a\in R^+$ 
\begin{equation}\label{u}u_\a=X_\a+\sqrt{-1} N_{-\theta,\a}X_{\a-\theta}.\end{equation}
Note that 
$$[x,u_\a]=\tfrac{\sqrt{-1}}{2}[X_\theta-X_{-\theta},X_\a+\sqrt{-1} N_{-\theta,\a}X_{\a-\theta}]
=\!\!\tfrac{1}{2}N_{-\theta,\a}N_{\theta,\a-\theta}X_{\a}-\tfrac{\sqrt{-1}}{2}N_{-\theta,\a}X_{\a-\theta}=-\tfrac{1}{2}u_\a,
$$
hence 
$\{u_\a\mid \a\in R^+, \a \text{ odd}\}$ is a basis of $\g_{-1/2}$. 
\begin{lemma}
 If $\a$ is a positive odd root then 
\begin{equation}\label{secondaa}\phi(u_\a)= N_{-\theta,\a} u_{\theta-\a}.\end{equation}
\end{lemma}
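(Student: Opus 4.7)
My plan is to expand $\phi(u_\a)$ term by term using formula \eqref{ox} for $\phi$ on the good root vectors, and to match the result with $N_{-\theta,\a}u_{\theta-\a}$. Conjugate linearity of $\phi$, combined with the reality of $N_{-\theta,\a}$ (a consequence of \eqref{Npm1}), yields
\[
\phi(u_\a)=\phi(X_\a)-\sqrt{-1}\,N_{-\theta,\a}\,\phi(X_{\a-\theta}).
\]
Since the positive odd root $\a$ has $\sigma_\a=\xi_\a=1$ and the negative odd root $\a-\theta$ has $\sigma_{\a-\theta}=-1$, $\xi_{\a-\theta}=1$, formula \eqref{ox} gives
\[
\phi(u_\a)=-\l_\a\,X_{-\a}\,-\,\sqrt{-1}\,N_{-\theta,\a}\,\l_{\a-\theta}\,X_{\theta-\a}.
\]

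The next step is to compute the scalars $\l_\a$ and $\l_{\a-\theta}$. By \eqref{lambda0}, the factor $-\xi_{\a_i}\l_i$ equals $-\sqrt{-1}$ on odd simple roots and $1$ on even ones, so from \eqref{lambdaalfa} one has $\l_\gamma=(-\sqrt{-1})^{N(\gamma)}$, where $N(\gamma)$ counts the odd simple roots in $\gamma$ with multiplicity. The description of $\theta$ recalled in Step 3 of the proof of Proposition \ref{constructphi} gives $N(\theta)=2$, so $\l_\theta=-1$. Since $\theta$ is even (consistently with the compatibility of the parity with the $ad\,x$-gradation) while both $\a$ and $\theta-\a$ are positive odd, additivity of $N$ and the oddness of both $N(\a)$ and $N(\theta-\a)$ force $N(\a)=N(\theta-\a)=1$; hence $\l_\a=-\sqrt{-1}$ and, by multiplicativity of $\l_\gamma$, $\l_{\a-\theta}=\l_\a/\l_\theta=\sqrt{-1}$. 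Plugging these in,
\[
\phi(u_\a)=\sqrt{-1}\,X_{-\a}+N_{-\theta,\a}\,X_{\theta-\a}.
\]

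Since $N_{-\theta,\a}u_{\theta-\a}=N_{-\theta,\a}X_{\theta-\a}+\sqrt{-1}\,N_{-\theta,\a}N_{-\theta,\theta-\a}X_{-\a}$, it remains to show $N_{-\theta,\a}N_{-\theta,\theta-\a}=1$, which by \eqref{Npm1} reduces to $N_{-\theta,\theta-\a}=N_{-\theta,\a}$. I would verify this by applying the super Jacobi identity to $(X_{-\theta},X_\a,X_{\theta-\a})$: since $\theta$ is even and $\a,\theta-\a$ are odd, it reads
\[
[X_{-\theta},[X_\a,X_{\theta-\a}]]=[[X_{-\theta},X_\a],X_{\theta-\a}]+[X_\a,[X_{-\theta},X_{\theta-\a}]].
\]
The left-hand side equals $-N_{\a,\theta-\a}\,h_\theta$ (the minus sign reflects $[X_{-\theta},X_\theta]=-h_\theta$ for an even root $\theta$), while the right-hand side, upon using the invariance identity $[X_{\a-\theta},X_{\theta-\a}]=h_\theta-h_\a$ (which follows from $\sigma_{\a-\theta}=-1$ and the linearity of the correspondence $\beta\mapsto\nu^{-1}(\beta)$), becomes $N_{-\theta,\a}(h_\theta-h_\a)+N_{-\theta,\theta-\a}h_\a$. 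Comparing $h_\a$-coefficients gives $N_{-\theta,\theta-\a}=N_{-\theta,\a}$ whenever $h_\a$ and $h_\theta$ are linearly independent; in the degenerate case where $\a$ is proportional to $\theta$ (as for $\a=\delta,\theta=2\delta$ in the $spo$ or $G(3)$ cases) the claimed identity $N_{-\theta,\theta-\a}=N_{-\theta,\a}$ is immediate.

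The principal obstacle is careful sign bookkeeping: the crucial sign $+1$ in $N_{-\theta,\a}N_{-\theta,\theta-\a}=1$ depends on the evenness of $\theta$, which controls the sign of $[X_{-\theta},X_\theta]$ in the super Jacobi identity. Once this is accounted for, together with the precise values of $\l_\a$ and $\l_{\a-\theta}$, the verification is essentially mechanical.
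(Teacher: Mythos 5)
Your proof is correct, and it follows the same overall skeleton as the paper's: expand $\phi(u_\a)$ via \eqref{ox}, arrive at $\phi(u_\a)=\I X_{-\a}+N_{-\theta,\a}X_{\theta-\a}$, and then reduce the claim to the identity $N_{-\theta,\a}N_{-\theta,\theta-\a}=1$ (the paper's \eqref{ns}), which combined with \eqref{Npm1} gives the result. The difference lies in how that identity is obtained. You prove it by a direct super Jacobi identity computation on $(X_{-\theta},X_\a,X_{\theta-\a})$, comparing the $h_\a$-coefficients to get $N_{-\theta,\theta-\a}=N_{-\theta,\a}$, with the degenerate case $\th=2\a$ handled separately; this is a self-contained calculation, at the cost of some sign bookkeeping and the linear-independence caveat. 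The paper instead gets \eqref{ns} essentially for free from a structural observation: since $\phi(x)=x$, the involution $\phi$ preserves the $ad\,x$-eigenspace $\g_{-1/2}$, which is spanned by the vectors $u_\beta$; the only basis vector supported on $X_{\theta-\a}$ and $X_{-\a}$ is $u_{\theta-\a}$, so the computed expression is forced to be proportional to $u_{\theta-\a}$, which is exactly \eqref{ns}. Your explicit evaluation of $\l_\a=-\I$ and $\l_{\a-\theta}=\I$ from \eqref{lambda0} and \eqref{lambdaalfa} (using $N(\theta)=2$ and the parity argument forcing $N(\a)=N(\theta-\a)=1$) is a correct and more detailed justification of the step the paper states briefly as ``$\phi(X_\a)=\I X_{-\a}$ if $\a$ is an odd root.'' Both routes are valid; the paper's is shorter, yours is more computational but independent of the observation that $\phi$ preserves the $ad\,x$-grading.
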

\begin{proof} 
By \eqref{ox}, $\phi(X_\a)=\I X_{-\a}$ if $\a$ is an odd root, hence, by \eqref{Npm1}, since $N_{-\theta,\a}$ is real,
\begin{align}\label{primaa}\begin{split}\phi(u_\a)&=\phi(X_\a+\sqrt{-1}N_{-\theta,\a}X_{\a-\theta})= 
\I X_{-\a}+N_{-\theta,\a}X_{\theta-\a}\\&=
N_{-\theta,\a}(X_{\theta-\a}+\I N_{-\theta,\a}^{-1}X_{-\a}).
\end{split}\end{align}
Note that, since $\phi(x)=x$,  $\phi(u_\a)$ has to belong to $\g_{-1/2}$. This forces 
\begin{equation} \label{ns}
N_{-\theta,\a}N_{-\theta,\theta-\a}=1,
\end{equation} 
and    \eqref{primaa} becomes \eqref{secondaa}.\end{proof}
\begin{proof}[Proof of Proposition \ref{fh}]
Set $v_\a=\tfrac{1}{2}(u_\a-\phi(u_\a))+ 	\tfrac{\I}{2}(u_\alpha+\phi(u_\alpha))$, where $\a$ runs over the positive odd roots. It is clear that $v_\a\in\mathfrak r$. We want to prove that the vectors $v_\a$ form an orthogonal basis of $\mathfrak r$. We need two auxiliary computations: 
\begin{align}\label{a1}
[e,u_\a]&=\I X_\a+N_{-\theta,\a}X_{\a-\th},\\
\label{a2} \langle u_\a,u_\be\rangle&=-(N_{-\theta,\a}+N_{-\theta,\be})\d_{\th-\a,\be}.
\end{align}
To prove \eqref{a1} use \eqref{rss}:
\begin{align*}
[e,u_\a]&=\tfrac{1}{2}[X_\th+X_{-\th}+\sqrt{-1}h_\theta,X_\a+\I N_{-\theta,\a}X_{\a-\th}]\\
&=\tfrac{1}{2}[X_\th+X_{-\th}+\sqrt{-1}h_\theta,   X_\a+\I N_{-\theta,\a}X_{\a-\th}]\\
&=\tfrac{1}{2}(\I N_{-\theta,\a}N_{\th,\a-\th}X_\a+N_{-\theta,\a}X_{\a-\th}+\I X_\a+N_{-\theta,\a}X_{\a-\th})\\
&=\I X_\a+N_{-\theta,\a}X_{\a-\th}.
\end{align*}
To prove \eqref{a2} use  \eqref{a1}
\begin{align*}\langle u_\a,u_\be\rangle&=(e|[u_\a,u_\be])=([e,u_\a]|u_\be)\\
&=(\I X_\a+N_{-\theta,\a}X_{\a-\th}|X_\be+\sqrt{-1}N_{-\theta,\be}X_{\be-\theta})=\\
&=\s_{\a-\theta}N_{-\theta,\a}\d_{\th-\a,\be}-\s_\a N_{-\theta,\be}\d_{\th-\a,\be}\\
&=-(N_{-\theta,\a}+N_{-\theta,\be})\d_{\th-\a,\be}.
\end{align*}
Set $$M_{\a,\be}=-(N_{-\theta,\a}+N_{-\theta,\be}).$$
Then, using \eqref{a2}
\begin{align*}
&\langle v_\a,v_\be\rangle\\&=\langle \tfrac{1+\I}{2} u_\a-\tfrac{1-\I}{2}N_{-\theta,\a}u_{\theta-\a}, \tfrac{1+\I}{2} u_\be-\tfrac{1-\I}{2}N_{-\theta,\be}u_{\theta-\be}\rangle\\
&=\tfrac{\I}{2}\langle u_\a, u_\be\rangle
-\tfrac{1}{2}N_{-\theta,\a}\langle u_{\theta-\a}, u_\be\rangle-\tfrac{1}{2}N_{-\theta,\be}\langle u_\a, u_{\th-\be}\rangle-\tfrac{\I}{2}N_{-\theta,\a}N_{-\theta,\be}
\langle u_{\theta-\a}, u_{\theta-\be}\rangle\\
&=
\tfrac{\I}{2}M_{\a,\be}\d_{\th-\a,\be}
-\tfrac{1}{2}N_{-\theta,\a}M_{\th-\a,\be}\d_{\a,\be}-\tfrac{1}{2}N_{-\theta,\be}M_{\a,\th-\be}\d_{\th-\a,\th-\be}\\&-\tfrac{\I}{2}N_{-\theta,\a}N_{-\theta,\be}
M_{\th-\a,\th-\be}\d_{\a,\th-\be}.\\
\end{align*}
Therefore by \eqref{rss} and \eqref{ns}
$$\langle v_\a,v_\be\rangle=2\d_{\a,\beta}.$$
In particular, the restriction of $\langle\cdot,\cdot\rangle$ to $\mathfrak r$ is positive definite.
\end{proof}

\section{Unitarity}
Let $\{e,x,f\}$ be the $sl_2$ triple constructed in Proposition \ref{constructphi}. Let $\h^\natural\subset\g^\natural$ be a Cartan subalgebra of $\g^\natural $ and fix
$$\h=\C x\oplus \h^\natural$$
as a Cartan subalgebra of $\g$. Let $\D$ be the set  of roots of $\g$ with respect to $\h$. Let $\{\pm\theta\}$ be the set of roots of $\mathfrak s$. Fix a set $\Dp$ of positive roots on $\D$ in such a way that  $\theta\in\Dp$ and $e\in \g_\theta$ (hence $f\in\g_{-\theta}$). Note that $x=\tfrac{1}{2} h_\theta$. 
 Note that $(\cdot|\cdot)_{|\mathfrak s}$ is non-degenerate, hence $(x|x)\ne 0$ and therefore we have the orthogonal direct sum of reductive subalgebras
 \begin{equation}\label{ods}
 \g_0=\C x\oplus \g^\natural.
 \end{equation}
Let $\g^\natural=\bigoplus_{i\ge 0}\g^\natural_i$ be  the decomposition of $\g^\natural$ into the direct sum of  ideals, where $\g^\natural_i$ is simple  for $i>0$ and  $\g_0^\natural$  is the center. Let $h^\vee$ be the dual Coxeter number of $\g$, and denote by  $\bar h^\vee_i$  half of the eigenvalue of the Casimir element of $\g^\natural _i$ with respect to $(\cdot |\cdot)_{|\g^\natural_i\times\g^\natural_i}$, when acting on $\g^\natural _i$. Note that $\bar h_0^\vee=0$. \par
In  \cite{KRW} the authors introduced (as a special case of a more general construction) the universal {\sl minimal} $W$-algebra $W^k_{\min}(\g)$, whose simple quotient is $W_k^{\min}(\g)$, attached to the grading \eqref{mg}. This is a vertex algebra  strongly and freely generated by elements $L$, $J^{\{v\}}$ where $v$ runs over a basis of $\g^\natural$, $G^{\{u\}}$ where $u$ runs over a basis of $\g_{-1/2}$, with the following  $\lambda$-brackets: $L$ is a Virasoro element  (conformal vector) with central charge $c(k)$ given by \eqref{cc},
 $J^{\{u\}}$ are primary of conformal weight $1$, $G^{v}$ are primary of conformal weight $\frac{3}{2}$ and (\cite[Theorem 5.1]{KW1})
\begin{align*}
[{J^{\{v\}}}_\l G^{\{u\}}]&=G^{\{[v,u]\}} &&\text{ for  $u\in \g_{-1/2}, v\in \g^\natural$,}\\
[{J^{\{v\}}}_\l J^{\{w\}}]&=J^{\{[v,w]\}}+\l\beta_k(v,w) &&\text{ for $v,w\in \g^\natural$.}
\end{align*}
Here
\begin{equation}
\label{eq:5.17}\beta_k (u,v) =\d_{i,j}(k+ \tfrac{h^\vee-\bar h_i^\vee}{2})(u|v),\quad u\in\g^\natural_i, v\in\g^\natural_j.
\end{equation}
The most explicit formula for the $\l$-bracket between the $G^{\{u\}}$ is given in \cite[Proposition 5.8]{Y}. It turns out that a crucial role is played by a certain monic quadratic polynomial $p(k)$, introduced in \cite[Table 4]{AKMPP} and thoroughly investigated in \cite{Y}. (In Remark \ref{63} we define it explicitly.) The following relation will be relevant in the sequel (see \cite[Theorem 3.2]{AKMPP}):
\begin{equation}\label{GG}
{G^{\{u\}}}_{(2)} G^{\{v\}}= 4 (e_\theta| [u,v])p(k).\end{equation}
\vskip5pt
The following proposition is a special case of \cite[Lemma 7.3]{KMP}, in view of Lemma \ref{31}.
\begin{prop}\label{descend}
Let $\phi$ be a conjugate linear involution of $\g$ such that
$
\phi(f)=f, \phi(x)=x, \phi(e)=e.
$
Then the map
\begin{equation}\label{phiespilcita} \phi(J^{\{u\}})= J^{\{\phi(u)\}},\quad \phi(G^{\{v\}})=G^{\{\phi(v)\}},\quad \phi(L)=L,\ \ u\in \g^\natural,\ v\in \g_{-1/2}\end{equation}
 extends to
 a  conjugate linear involution of the vertex algebra $W^k_{\min}(\g)$.
\end{prop}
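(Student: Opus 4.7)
The plan is to deduce this statement from \cite[Lemma 7.3]{KMP}, which is cited in the excerpt and asserts that every conjugate linear involution of $\g$ satisfying the three requirements of \eqref{prima} lifts canonically to a conjugate linear involution of the vertex algebra $W_k(\g,x,f)$. The proof thus breaks into (a) verifying the hypotheses of that lemma and (b) identifying the resulting involution on the specified strong generators.

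First I would check the hypotheses. The conditions $\phi(x)=x$ and $\phi(f)=f$ are part of the assumption. For the third condition $\overline{(\phi(a)|\phi(b))}=(a|b)$, I invoke Lemma \ref{31}: we only need $(x|x)$ to be a nonzero real number together with $\phi(x)=x$. With our normalization $(\theta|\theta)=2$ and $x=\tfrac12 h_\theta$, one has $(x|x)=\tfrac12$, so Lemma \ref{31} applies and all three conditions of \eqref{prima} hold. Thus \cite[Lemma 7.3]{KMP} furnishes a conjugate linear involution $\Phi$ of $W^k_{\min}(\g)$.

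Second, I would identify $\Phi$ on the strong generators $L$, $J^{\{u\}}$, $G^{\{v\}}$. Recall that $W^k_{\min}(\g)$ is realized as the degree zero BRST cohomology of a complex $V^k(\g)\otimes F(\g_+)\otimes F^{ch}(\g_{1/2})$. The involution $\phi$ lifts to $V^k(\g)$ in the obvious way on currents (this is well-defined thanks to the $\phi$-invariance of the bilinear form just verified), and extends to the charged and neutral ghost algebras by the action of $\phi$ on the $ad\,x$-homogeneous subspaces $\g_+$ and $\g_{1/2}$, which are $\phi$-stable since $\phi(x)=x$. The BRST differential is built from $\phi$-equivariant data (the affine currents attached to $\g_{\ge 1/2}$ and the constant $(f|\cdot)$), so $\phi$ commutes with it and descends to cohomology. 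Now the explicit formulas of \cite[Theorem 5.1]{KW1} express $J^{\{u\}}$ and $G^{\{v\}}$ as the affine current $u$, respectively a quadratic expression linear in $v$, plus ghost corrections that are $\phi$-equivariantly linear in $u$, respectively $v$; similarly the conformal vector $L$ is a sum of a Sugawara term, $\partial x$, and a ghost piece, each invariant under the lifted $\phi$. Substituting shows $\Phi(J^{\{u\}})=J^{\{\phi(u)\}}$, $\Phi(G^{\{v\}})=G^{\{\phi(v)\}}$ and $\Phi(L)=L$, matching \eqref{phiespilcita}.

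The main obstacle is bookkeeping rather than conceptual: one must keep track of the complex-conjugation factors and the fermionic signs arising from the ghost extension of $\phi$, in order to confirm that the lifted $\Phi$ is genuinely an involution and that it preserves the BRST differential. All of this is, however, exactly what is done in the proof of \cite[Lemma 7.3]{KMP}, so the present proposition follows as a direct specialization of that lemma to the minimal Dynkin datum, together with a comparison of the induced action on generators with \eqref{phiespilcita}.
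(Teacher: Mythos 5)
Your proposal is correct and follows the same route as the paper, which disposes of this proposition in one line as ``a special case of \cite[Lemma 7.3]{KMP}, in view of Lemma \ref{31}'' — i.e.\ exactly your step of using Lemma \ref{31} with $(x|x)=\tfrac12\neq 0$ to upgrade the two stated conditions to all three conditions of \eqref{prima} and then invoking the cited lemma. The additional BRST bookkeeping you sketch is consistent with what \cite[Lemma 7.3]{KMP} does but is not reproduced in this paper.
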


By Proposition \ref{constructphi}  there is a    conjugate linear involution $ \phi$ on $\g$ such that $\phi(x)=x,\,\phi(f)=f$ and $(\g^\natural)^{\phi}$ is a compact real form of $\g^\natural$, hence, by Proposition \ref{descend}, $\phi$ induces a  conjugate linear involution of the vertex algebra $W^k_{\min}(\g)$, and descends to  a  conjugate linear involution $\phi$ of its unique simple quotient $W_k^{\min}(\g)$, which we again denote by $\phi$.

  By \cite[Proposition 7.4 (b)]{KMP}, 
$W^k_{\min}(\g)$ admits a unique $\phi$-invariant Hermitian form $H(\cdot,\cdot)$ such that $H(\vac,\vac)=1$. Recall that if
$k+h^\vee\ne 0$ then the kernel of $H(\cdot,\cdot)$  is the unique maximal ideal of  $W^k_{\min}(\g)$, hence $H(\cdot,\cdot)$  descends to define a $\phi$-invariant Hermitian form on $W_k^{\min}(\g)$, which we again denote by $H(\cdot,\cdot)$. 
\vskip5pt
In the following we investigate the unitarity of  $W_k^{\min}(\g)$ (cf. Definition \ref{unidef}) when $\g$ is as in Section \ref{2}. With a slight abuse of terminology, we also say that 
$W^k_{\min}(\g)$ is  unitary if  $H(\cdot,\cdot)$ is positive semidefinite. It is clear that for   $k\ne - h^\vee$,  $W^k_{\min}(\g)$ is  unitary if and only if $W_k^{\min}(\g)$ is unitary.

We need  to fix notation  for affine vertex algebras.
Let $\aa$ be a Lie superalgebra equipped with a nondegenerate  invariant supersymmetric bilinear form $B$. The universal affine vertex algebra $V^B(\aa)$ is  the universal enveloping vertex algebra of  the  Lie conformal superalgebra $R=(\C[T]\otimes\aa)\oplus\C$ with $\lambda$-bracket given by
$$
[a_\lambda b]=[a,b]+\lambda B(a,b),\ a,b\in\aa.
$$
In the following, we shall say that a vertex algebra $V$ is an affine vertex algebra if it is a quotient of some $V^B(\aa)$. If $\aa$ is simple Lie algebra, we denote by $(\cdot|\cdot)^\aa$ the normalized invariant bilinear form on $\aa$, defined by the condition $(\a|\a)^\aa=2$ for a long root $\a$. Then  $B=k(\cdot|\cdot)^\aa$, and  we simply write $V^k(\aa)$.  If $k\ne -h^\vee$, then  $V^k(\aa)$ has a unique simple quotient, which will be denoted by 
$V_k(\aa)$.

Consider the universal affine vertex algebra $V^{\a_k}(\g_0)$ with $\lambda$-bracket (see \cite[(5.16)]{KW1})
$$[a_\lambda b]=[a,b]+\l\a_k(a,b),$$
where
\begin{equation}
\label{eq:5.16}
  \alpha_k (a, b) = ((k+h^\vee)(a|b)-\tfrac{1}{2}
\kappa_{\fg_0}(a,b))\,,
\end{equation}
and where $\kappa_{\fg_0}$ denotes the Killing form of $\fg_0$. Note that $$\a_k(a,b)= \d_{i,j}(k+h^\vee-\bar h_i^\vee)(a|b)\text{ if } a\in\g^\natural_i,\ b\in \g^\natural_j.$$
\par
Let $\psi$ be a conjugate linear involution of $\g$ such that $
(\psi(x) |\psi(y))=\overline{(x|y)}$. By \cite[\S5.3]{KMP} there exists a unique $\psi$-invariant Hermitian form $H$ on $V^k(\g)$.  If $k\ne - h^\vee$, then the kernel of  $H$ is the maximal ideal of $V^k(\g)$, hence $H$ can be pushed down to $V_k(\g)$.\par
Recall that  we assume that $\g^\natural$ is not abelian. Let $\theta_i$ be the highest root of $\g^\natural_i$ for $i>0$. Set 
$$z_i(k)=
\frac{2}{u_i}\left(k+\frac{h^\vee-\bar h^\vee_i}{2}\right),$$
where
$$u_i=\begin{cases}2\quad&\text{if $i=0$,}\\
(\theta_i|\theta_i)\quad&\text{if $i>0$.}
\end{cases}
$$
Let  $(\cdot|\cdot)_i^\natural$ denote the normalized invariant bilinear form on $\g^\natural_i$ for $i>0$ and let 
$(\cdot|\cdot)_0^\natural=(\cdot|\cdot)_{|\g_0^\natural\times\g_0^\natural}$.  Note that, for $i>0$,  $(a|b)_i^\natural= \d_{i,j}\frac{(\theta_i|\theta_i)}{2}(a|b)$,  hence,\begin{equation}
\label{ab}
\beta_k(a, b)=\d_{i,j} z_i(k)(a|b)_i^\natural  \text{ for $a\in \g^\natural_i,\ b\in \g^\natural_j$.}
\end{equation}
Moreover we have
\begin{equation}
\label{abb}
\a_k(a, b)=\d_{i,j} \frac{2}{(\theta_i|\theta_i)}\left(k+h^\vee-\bar h^\vee_i\right)(a|b)^\natural= \d_{i,j} (z_i(k)+\chi_i)(a|b)^\natural \text{ for $a\in \g^\natural_i, b\in \g^\natural_j$,}
\end{equation}
where 
\begin{equation}\label{chii}\chi_i=\frac{h^\vee-\bar h^\vee_i}{u_i},\ i\geq 0.\end{equation}
The relevant data for computing $\chi_i,$ for  $i\geq 0$ are collected in  Table 2, where the explicit value of $z_i(k)$ for $i\geq 0$ is also displayed. 
Note that $z_0(k)=k+\tfrac{1}{2}h^\vee$.
\begin{table}{\small
\begin{tabular}{   c | c| c| c| c | c | c}
$\g$ & $\g^\natural$ & $u_i$ & $h^\vee$ & $\bar h_i^\vee$ & $z_i(k)$ &$\chi_i$\\\hline
$sl(2|m), m\ge 3$ & $ \C\oplus sl_m$ & $2,-2$ & $2-m$ & $0,-m$ & $k-\frac{m-2}{2}, -k-1$&$1-m/2,-1$\\\hline
$psl(2|2)$& $sl_2$  & $-2$ &  $0$ & $-2$ & $-k-1$&$-1$\\\hline
$osp(4|m), m>2$ & $sl_2\oplus sp_m$&$2, -4$ & $2-m$ & $ 2, -m-2$ & $k-\frac{m}{2}, -\frac{1}{2}k-1$ &$-m/2,-1$\\\hline
$spo(2|3)$& $sl_2$  &$-1/2$ & $1/2$ & $-1/2$ & $-4k-2$&$-2$\\\hline
$spo(2|m), m>3$ & $so_m$ &  $-1$& $2-m/2$ & $1-m/2$ &  $-2k-1$&$-1$\\\hline
$D(2,1;a)$ &  $sl_2\oplus sl_2$ & $-2-2a, 2a$&$0$ &  $-2-2a,2a $ & $-\frac{1}{1+a}k-1, \frac{1}{a}k-1$ &$-1,-1$\\\hline
$F(4)$& $so_7$  & $-4/3$ &$-2$ & $-10/3$ & $-\frac{3}{2} k-1$ &$-1$\\\hline
$G(3)$& $G_2$ &$-2/3$  &$-3/2$  &  $-3$ &$ -\frac{4}{3} k-1$&$-1$
\end{tabular}
}
\vskip5pt
\caption{}
\end{table}

Recall from \cite{AKMPP} that a level $k$ is {\it collapsing} for $W_k^{\min}(\g)$ if $W_k^{\min}(\g)$ is either $\C$ or  the  simple affine vertex algebra over $\g^\natural$.  The main result of \cite{AKMPP} states that $k$ is collapsing if and only if $p(k)=0$.

We summarize in the following result the content  of Theorem 3.3 and Proposition 3.4 of \cite{AKMPP} relevant to our setting.
\begin{theorem}\label{oldresults} Let $\g$ be a basic simple Lie superalgebra from  Table 2. Assume $k\ne -h^\vee$.
\begin{enumerate}
\item If $\g^\natural$ is simple then $k$ is collapsing if and only if  $z_1(k)=0$ or $k=-\tfrac{\bar h_{1}^\vee}{2}-1$.
\item If $\g^\natural$ is not simple then $k$ is collapsing if and only if there is $i$ such that   $z_i(k)=0$.
\end{enumerate} 
Moreover we have $W_k^{\min}(\g)=\C$ if and only if $\g^\natural$ is simple and $z_1(k)=0$.
\end{theorem}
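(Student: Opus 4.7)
The plan is to derive Theorem~\ref{oldresults} from the main result of \cite{AKMPP}: $k$ is collapsing for $W_k^{\min}(\g)$ if and only if $p(k)=0$. The heart of that equivalence is \eqref{GG}. For the ``$\Leftarrow$'' direction, once $p(k)=0$ we get $(G^{\{u\}})_{(2)}G^{\{v\}}=0$ for all $u,v\in\g_{-1/2}$; combining this with the full $\lambda$-bracket $[G^{\{u\}}{}_\lambda G^{\{v\}}]$ computed in \cite[Proposition~5.8]{Y} (which, besides the quadratic piece, involves only $L$ and the $J^{\{w\}}$), one shows that the vertex subalgebra generated by the $G^{\{u\}}$ in $W^k_{\min}(\g)$ lies in a proper ideal, hence vanishes in the simple quotient $W_k^{\min}(\g)$, forcing collapse onto $V_{\beta_k}(\g^\natural)$. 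For ``$\Rightarrow$'', collapsing means $G^{\{u\}}=0$ in $W_k^{\min}(\g)$, so by \eqref{GG} we have $p(k)\cdot(e|[u,v])=0$ for all $u,v\in\g_{-1/2}$; since the bilinear form $(u,v)\mapsto(e|[u,v])$ on $\g_{-1/2}$ is non-degenerate (cf.~Proposition~\ref{fh}), we conclude $p(k)=0$.

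Next, I would compute the monic quadratic polynomial $p(k)$ explicitly. Extracting $p(k)$ from the $\lambda$-bracket of \cite[Proposition~5.8]{Y} and performing a case-by-case calculation using the data of Table~2, one verifies that $p(k)$ always splits into two linear factors in $k$. When $\g^\natural$ is simple, the two roots are exactly $k$ with $z_1(k)=0$ and $k=-\tfrac{\bar h_1^\vee}{2}-1$, which gives~(1); when $\g^\natural$ decomposes as a sum of two ideals $\g^\natural_i\oplus\g^\natural_j$ (either a one-dimensional center plus a simple factor, or two simple factors), the two roots are precisely the zeros of $z_i(k)$ and $z_j(k)$, which gives~(2).

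For the \emph{Moreover} statement, observe that at a collapsing level $W_k^{\min}(\g)$ coincides with the simple quotient of $V^{\beta_k}(\g^\natural)$; this simple quotient equals $\C$ if and only if (i) $\g^\natural$ has no abelian summand (otherwise a non-trivial Heisenberg subalgebra survives), and (ii) the restriction of $\beta_k$ to every simple summand $\g^\natural_i$ vanishes, i.e.~$z_i(k)=0$ for every $i>0$. When $\g^\natural$ is simple this reduces to the single condition $z_1(k)=0$; when $\g^\natural$ has two summands the two $z_i(k)$'s are non-proportional linear polynomials in $k$ (visible from Table~2), so they cannot vanish simultaneously, ruling out total collapse in case~(2).

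The main obstacle is the case-by-case verification of the factorization of $p(k)$ in each of the seven rows of Table~2. Unlike the abstract implication ``$p(k)=0\iff$ collapsing'', the identification of the roots with the specific linear forms $z_i(k)$ and $k+\tfrac{\bar h_1^\vee}{2}+1$ is not conceptual and relies on the explicit structure constants of the $\g$'s; I would simply import the calculation recorded in \cite[Table~4]{AKMPP}.
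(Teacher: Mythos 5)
First, for context: the paper does not prove this theorem at all --- it is presented verbatim as a summary of Theorem 3.3 and Proposition 3.4 of \cite{AKMPP}, supplemented by Remark \ref{63}, which records the factorization of $p(k)$ into the linear forms $z_i(k)$ (and $k+\tfrac{\bar h_1^\vee}{2}+1$ in the simple case). Your overall route --- reduce to the criterion ``$k$ collapsing $\iff p(k)=0$'' and read off the roots of $p(k)$ --- is therefore the same as the paper's, and your ``$\Rightarrow$'' argument from \eqref{GG} plus non-degeneracy of $(e|[\cdot,\cdot])$ on $\g_{-1/2}$ is sound. (Your ``$\Leftarrow$'' sentence understates the difficulty: the brackets $[G^{\{u\}}{}_\lambda G^{\{v\}}]$ put $L$ and quadratic expressions in the $J$'s into the ideal generated by the $G^{\{u\}}$, and proving that this ideal is proper is the actual content of \cite[Theorem 3.3]{AKMPP}; since you import that result wholesale, this is a presentational issue rather than a gap.)

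There is, however, a concretely false step in your argument for the ``Moreover'' clause. You assert that when $\g^\natural$ has two components the two linear forms $z_i(k)$ are non-proportional and hence cannot vanish simultaneously. For $\g=D(2,1;a)$ Table 2 gives $z_1(k)=-\tfrac{k}{1+a}-1$ and $z_2(k)=\tfrac{k}{a}-1$; at $a=-\tfrac{1}{2}$ both equal $-2k-1$ and vanish simultaneously at $k=-\tfrac{1}{2}\neq -h^\vee=0$. By your own reasoning this yields $W^{\min}_{-1/2}(D(2,1;-\tfrac{1}{2}))=\C$ with $\g^\natural=sl_2\oplus sl_2$ not simple, and the paper itself confirms this collapse to $\C$ elsewhere (the excluded case $m=n=1$ in the proof of Theorem \ref{fin} and the remark following it). So the ``only if'' half of the Moreover statement cannot be obtained by your argument; as literally stated it fails for this one superalgebra and a correct treatment must single it out. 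A smaller slip of the same kind: for $sl(2|m)$ your reason (i) --- ``a non-trivial Heisenberg subalgebra survives'' --- is wrong at the collapsing level where $z_0(k)=0$; there the Heisenberg factor is trivial and it is the surviving $V_{-m/2}(sl_m)$, i.e.\ $z_1(k)\neq 0$, that prevents total collapse.
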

\begin{remark}\label{63} We say that an ideal in $\g^\natural$ is a {\it component} of $\g^\natural$ if it is simple or $1$-dimensional.
It follows from \cite[Lemma 3.1]{AKMPP},  \cite[Theorem 5.9]{KMP} that, up to a constant factor 
$$p(k)=\begin{cases} z_1(k)z_2(k)\quad & \text{if $\g^\natural$ has two components,}\\
z_1(k)(k+\frac{\bar h_1^\vee}{2}+1)\quad & \text{otherwise.}
\end{cases}$$
The roots of $p(k)$ are the collapsing levels defined in the Introduction.
\end{remark}

\begin{theorem}\label{main} Let $\g$ be a Lie superalgebra from  Table 2 with $\g^\natural$ non abelian    and let $W_k^{\min}(\g)$ be the corresponding  minimal simple $W$-algebra. Assume that $k\ne-h^\vee$. 
Then

\noindent (a)  $W_k^{\min}(\g)$ is unitary precisely in the following cases:
\begin{enumerate}
\item $\g^\natural$ is semisimple and $z_i(k)\in \mathbb Z_+$ for all $i>0$. 
\item $\g=sl(2|m), m\geq 3,\  k=-1$.
\end{enumerate}

\noindent (b) 
If there exists a unitary module for $W_k^{\min}(\g)$, then $W_k^{\min}(\g)$ is unitary. 
\end{theorem}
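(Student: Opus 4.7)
The strategy is to deduce necessary conditions for unitarity by testing the positivity of the $\phi$-invariant form $H$ on the two natural families of generators of $W_k^{\min}(\g)$ --- the affine currents $J^{\{v\}}$ and the supercurrents $G^{\{u\}}$ --- and then to verify sufficiency case by case using Table~2.

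First, by Proposition~\ref{descend} the vertex subalgebra $A\subset W_k^{\min}(\g)$ generated by $\{J^{\{v\}}\mid v\in\g^\natural\}$ is $\phi$-invariant, and by \eqref{ab} it is a quotient of the tensor product, over simple ideals $\g^\natural_i$ with $i>0$, of the universal affine vertex algebras $V^{z_i(k)}(\g^\natural_i)$, together with a Heisenberg piece on $\g^\natural_0$. Since $\phi|_{\g^\natural}$ is compact, unitarity of $W_k^{\min}(\g)$ forces unitarity of each tensor factor, which by the classical characterisation of unitary representations of untwisted affine Kac--Moody algebras requires $z_i(k)\in\mathbb Z_+$ for every $i>0$.

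Second, the odd generator $G^{\{u\}}$ is primary of conformal weight $3/2$, so \eqref{12}--\eqref{113} give $A(z)G^{\{u\}}=z^{-3}G^{\{\phi(u)\}}$. Applying the invariance relation \eqref{111} to $G^{\{u\}}_{-3/2}\vac$ and using that the lower-order modes annihilate $\vac$ once rearranged, the only surviving contribution is the central term of the $\lambda$-bracket, which by \eqref{GG} is proportional to $(X_\theta|[\phi(u),u])p(k)$; since $[u,\phi(u)]\in\C f$, this expression is a real nonzero multiple of $\langle\phi(u),u\rangle p(k)$. Choosing $u$ in the eigenspace on which $\langle\cdot,\cdot\rangle$ is positive definite (Proposition~\ref{fh}, possibly after applying the sign twist of $\phi$ from the remark at the end of Section~4), positivity of $H$ then forces $p(k)\geq 0$.

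Intersecting the two constraints with $k\neq -h^\vee$ and solving them row by row against Table~2, using the factorisation of $p(k)$ in Remark~\ref{63}, reproduces precisely the list in the statement: item~(2) of part~(a) is the collapsing level $k=-1$ for $sl(2|m)$ where $z_1(-1)=0$, at which by Theorem~\ref{oldresults} the algebra $W_{-1}^{\min}(sl(2|m))$ collapses to an affine vertex algebra on $\g^\natural$, directly verified to be unitary; the remaining cases (1), (3)--(7) sit in item~(1), where $\g^\natural$ is semisimple and sufficiency is obtained by realising $W_k^{\min}(\g)$ as the quantum Hamiltonian reduction of a unitary integrable $\ga$-module at the indicated level, tracking that positivity of the contravariant form descends through the BRST cohomology. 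For part~(b), the two necessity arguments apply verbatim to any unitary $W_k^{\min}(\g)$-module $M$ by replacing $\vac$ with a nonzero lowest energy vector of $M$, forcing the same inequalities on $k$; hence $k$ lies in the list of part~(a) and $W_k^{\min}(\g)$ is itself unitary by the sufficiency step. The principal obstacle is precisely this sufficiency in item~(1): while the necessary bounds come from uniform local computations, positivity of the Hermitian form on the reduced algebra must be checked via a delicate case-by-case construction of the unitary resolvent module across the seven families of Table~2, together with control of the contravariant form under Hamiltonian reduction.
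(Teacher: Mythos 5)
Your necessity argument via the affine subalgebra generated by the $J^{\{v\}}$ is exactly the paper's (restriction of the form to the image of $V^{\beta_k}(\g^\natural)=\bigotimes_i V^{z_i(k)}(\g^\natural_i)$ forces $z_i(k)\in\mathbb Z_+$), but the rest of the proposal has genuine gaps. The most serious is sufficiency in case (1): you assert that positivity is obtained ``by realising $W_k^{\min}(\g)$ as the quantum Hamiltonian reduction of a unitary integrable module, tracking that positivity of the contravariant form descends through the BRST cohomology.'' No such descent-of-positivity through BRST cohomology is established here or available off the shelf; it is precisely the hard point, and the paper does something entirely different. It uses the Kac--Wakimoto free field realisation $\Psi:W^k_{\min}(\g)\to V^{\a_k}(\g_0)\otimes F(A_{ne})$, checks that $\Psi$ intertwines the invariant Hermitian forms, and then verifies positivity factor by factor: $V^{z_i(k)+\chi_i}(\g^\natural_i)$ for $z_i(k)+\chi_i\in\mathbb Z_+$, the rank-one piece $V^{k+h^\vee}(\C x)$ (positive definite iff $k+h^\vee<0$), and the fermionic factor $F(\g_{1/2})$ via Proposition \ref{fh}. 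Note that this argument only covers $z_i(k)\ge-\chi_i$, which is strictly stronger than $z_i(k)\in\mathbb Z_+$; the leftover window $0\le z_i(k)<-\chi_i$ has to be disposed of separately using the collapsing-level results (Theorem \ref{oldresults}), a step absent from your proposal.

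Two further points. First, your constraint ``$p(k)\ge 0$'' from $H(G^{\{u\}},G^{\{u\}})=4p(k)\langle\psi(u),u\rangle$ is sign-ambiguous as you state it (it depends on which eigenspace of $\psi$ the vector $u$ lies in and on the normalisation of $p$), and it is not what the paper uses to exclude the non-collapsing levels of $sl(2|m)$. There the actual mechanism is structural: positivity on the current $J^{\{\varpi\}}$ of the one-dimensional centre forces $\psi(\varpi)=\varpi$, and then $[\varpi,u]=-\tfrac{m}{2}u$ on $\g_{-1/2}$ yields $\langle\psi(u),u\rangle=-\langle\psi(u),u\rangle=0$, contradicting $H(G^{\{u\}},G^{\{u\}})\ne 0$ at non-collapsing $k$. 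Your intersection of the two inequalities does not reproduce this exclusion. Second, for part (b) the paper does not rerun the $G$-computation on a lowest-energy vector; it simply observes that a unitary $W_k^{\min}(\g)$-module restricts to a unitary module over each $V^{z_i(k)}(\g^\natural_i)$, forcing $z_i(k)\in\ZZ_+$, and then invokes part (a). Your version of (b) is workable in outline but inherits the unproved sufficiency step.
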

\begin{proof} Assume first that $\g^\natural$ is semisimple. 
We first prove that, if $z_i(k)\in \mathbb Z_+$ for all $i>0$ then  $W_k^{min}(\g)$ is unitary.

Recall from \cite[Theorem 5.2]{KW1} that there is a vertex algebra homomorphism $\Psi:W^k_{\min}(\g)\to V^{\a_k}(\g_0)\otimes F(A_{ne})$. We equip $W^k_{\min}(\g)$ and  $V^{\a_k}(\g_0)\otimes F(A_{ne})$ with their invariant Hermitian forms. Using the explicit description of the embedding given in \cite{KW1}, by \eqref{phiespilcita}, one can check that $\Psi$ preserves the forms.  

If  $\aa$  is a quadratic abelian subalgebra of $\g_{\bar 0}$, and $B=t(\cdot | \cdot)_{|\aa\times \aa}$ we denote $V^B(\aa)$ by $V^t(\aa)$.
By \eqref{abb}
$$V^{\a_k}(\g_0)=\left(\bigotimes_{i\ge 1} V^{z_i(k)+\chi_i}(\g_i^\natural)\right)\otimes V^{k+h^\vee}(\C x).$$
Since $\phi$ from Proposition \ref{constructphi} corresponds to a compact real form of $\g^\natural$, if $z_i(k)+\chi_i\in \mathbb Z_+$, the $\phi$--invariant Hermitian form on  $V^{z_i(k)+\chi_i}(\g^\natural_i)$ is positive semidefinite.  \par Recall from \cite[\S 5.2]{KMP} that if $\aa$ is an even abelian 
Lie algebra with a symmetric bilinear form $B$ and $\eta$ is a conjugate linear involution such that $B(\eta(a), \eta(b))=\overline{B(a,b)},\,a,b\in\aa$, then letting $\eta$ be the unique extension of $\eta$ to $V^B(\aa)$, the unique $\eta$-invariant Hermitian form on  $V^B(\aa)$ is positive definite if and only if $B_{|\aa_\R\times  \aa_\R}$ is positive definite, where $\aa_\R=\{a\in\aa\mid \eta(a)=-a\}$. \par 
Applying this remark to 
$\aa=\C x, \eta=\phi$ and $B=k+h^\vee(\cdot |\cdot)$, since
$\phi(x)=x$ and $(x|x)=\frac{1}{4}(\theta|	
\theta)=\frac{1}{2}$, the $\phi$--invariant Hermitian form on $V^{k+h^\vee}(\C x)$ is positive definite if and only if 
$$
(k+h^\vee)(x|x)<0\Leftrightarrow k+h^\vee<0.
$$
Finally we check the unitarity of $F(A_{ne})=F(\g_{1/2})$. We have to check that, according to \cite[\S 5.1]{KMP},  the restriction of $\langle\cdot, \cdot\rangle$ to $\{u\in\g_{-1/2}\mid \phi(u)=-u\}$  is positive definite.  This is  Proposition \ref{fh}.
 
It follows that, if $z_i(k)+\chi_i\in \mathbb Z_+$ and $k+h^\vee<0$, then $V^{\a_k}(\g_0)\otimes F(A_{ne})$ is unitary, hence $W_k^{\min}(\g)$ is. By looking at Table 2 one sees that $\chi_i\in-\ZZ_+$ and checks that $z_i(k)\geq 0$ implies $k+h^\vee<0$. Thus, if $z_i(k)\ge -\chi_i$, then $W_k^{\min}(\g)$ is unitary.

 Now we look at the missing cases $0\le z_i(k)< -\chi_i$. Assume first $\g^\natural$ simple. If $\chi_1=-1$  then the only value not covered by the above condition is $z_1(k)=0$,
 so, by Theorem \ref{oldresults},  $W_k^{\min}(\g)=\C$.  In the case of $spo(2|3)$ one should also consider the cases $z_1(k)=1, z_1(k)=0$: by Theorem \ref{oldresults} in the former case $k$ is collapsing with $W_k^{\min}(spo(2|3))=V_1(sl_2)$, whereas in the latter case $k=-h^\vee$. If $\g^\natural$ is semisimple but not simple, then $\g$ is either $osp(4|m)$ or $D(2,1;a)$.
For $\g=D(2,1;a)$ we have to consider only the case in which either $z_1(k)$ or $z_2(k)$ is zero. By Theorem \ref{oldresults}, they correspond to collapsing levels. For $\g=osp(4|m)$, our condition reduces to
$$m/2\le k<m,\ k\le -2,\text{ or }m/2\le k,\ -4< k\le -2
$$
which is never satisfied.

We now prove that our condition is also necessary, if $\g^\natural$ is semisimple. Assume that there is a conjugate linear involution $\psi$ of $W_k^{\min}(\g)$ such that the $\psi$--invariant Hermitian form is positive definite.
Recall  that the map $a\mapsto J^{\{a\}}$ extends to  an embedding $V^{\beta(k)}(\g^\natural)\subset W^k_{\min}(\g)$. In particular the image of $V^{\beta(k)}(\g^\natural)$ in $W_k^{\min}(\g)$ is unitary and simple.

By \eqref{ab},
$$ V^{\beta(k)}(\g^\natural)=\bigotimes_{i\ge 1} V^{z_i(k)}(\g_i^\natural),$$
hence we have an embedding 
$$
\bigotimes_{z_i(k)\ne 0} V_{z_i(k)}(\g_i^\natural)\subset W_k^{\min}(\g).
$$
By \S 5.3 of \cite{KMP}, we see that $\psi$ restricted to $\bigoplus\limits_{z_i(k)\ne0}\g_i^\natural$ defines a compact real form and, if $z_i(k)\ne0$,  then $z_i(k)\in\nat$. Therefore $z_i(k)\in\ZZ_+$ for all $i$.

\par We now discuss the case when $\g$ is not semisimple. Note that  $\g_0^\natural\ne \{0\}$ only when $\g=sl(2|m)$. In this case $\g_0^\natural= \C \varpi$, where $\varpi=\left(
\begin{array}{cc|c}
m/2 & 0 &  0\\0 & -m/2   & 0\\   \hline 0& 0 & I_m
\end{array}\right)$, and $(a|b)=str(ab)$. By Theorem \ref{oldresults}, the  collapsing levels are $k=-1$ and $k=-m/2-1$.
If $k=-1$ then $z_0(-1)=-m/2,  z_1(-1)=0$ and  $W_k^{\min}(\g)$ is the Heisenberg vertex algebra $M(\C\varpi)=V^{-m/2}(\C\varpi)=V_{-m/2}(\C\varpi)$ and this vertex algebra is unitary.  If $k=-m/2-1$ then  $z_0(m/2-1)=0,  z_1(m/2-1)=-m/2$ and $W_k^{\min}(sl(2|m))=V_{-m/2}(sl(m))$ which is not unitary.
Assume now that $k$ is not collapsing. Let $\psi$ be a conjugate linear involution of $W_k^{\min}(sl(2|m))$ such that the $\psi$-invariant Hermitian form 
$H$ is positive definite. In particular there is an embedding 
$$V^{k-m/2-1}(\C\varpi)\otimes V_{-k-1}(sl(m))\hookrightarrow W_k^{\min}(sl(2|m))$$
Hence $\psi$ induces by restriction a conjugate linear involution of $\C\varpi\oplus sl(m)$, thus $\psi(\varpi)=\zeta\varpi,  |\zeta|=1$. 
Moreover, $\psi_{|sl(m)}$ corresponds to a compact real of form of $sl(m)$ and $-k-1\in\Z_+$.
Let $\langle b \rangle_{ev}$ denote the expectation value of $b$, i.e.
the coefficient of the projection of $b$ on the vacuum vector $\vac$. Using the formulas given in
\cite[\S 5.3]{KMP} we have 
$$0<H(J^{\{\varpi\}},J^{\{\varpi\}})\!=\!\langle -J^{\{\psi(\varpi)\}}_1J^{\{\varpi\}}_{-1}\rangle_{ev}=-(k-m/2-1)\zeta (\varpi|\varpi)=-\zeta(k-m/2-1)(m^2/2-m).
$$
Therefore $\zeta=1$, so that 
\begin{equation}\label{=pi}\psi(\varpi)=\varpi.
\end{equation}
Observe now that by \cite{AKMPP}, since $k$ is not collapsing, the image of $G^{\{u\}}$ in $W_k^{\min}(\g)$ is non-zero if $u\ne 0$. Note that, from the relation $[J^{\{a\}},G^{\{u\}}]=J^{\{[a,u]\}}$, $a\in\g_0, u\in \g_{-1/2}$,  it follows that $\psi$ can be extended to $\g_{-1/2}$, in such a way that $\psi([a,u])=[\psi(a),\psi(u)]$.\par
Note that 
\begin{equation}\label{id} [\varpi, u]=-\tfrac{m}{2}u,\quad u\in\g_{-1/2}.
\end{equation}
Take $u\in\g_{-1/2}$. Compute, using \eqref{id} and \eqref{=pi}
\begin{align*}
\langle\psi(u)|u\rangle&=-\tfrac{2}{m}\langle\psi(u)|[\varpi,u]\rangle=\tfrac{2}{m}\langle[\varpi,\psi(u)]|u\rangle\\&=\tfrac{2}{m}\langle[\psi(\varpi),\psi(u)]|u\rangle
=\tfrac{2}{m}\langle\psi([\varpi,u])|u\rangle=-\langle\psi(u)|u\rangle,
\end{align*}
so $\langle\psi(u)|u\rangle=0$. But this contradicts positivity of $H$, since, by \cite[(7.6)]{KMP} and \eqref{GG}
\begin{align*}H(G^{\{u\}},G^{\{u\}})&=H(G^{\{u\}}_{-3/2}\vac,G^{\{u\}}_{-3/2}\vac)=\left\langle G^{\{\psi(u)\}}_{3/2}G^{\{u\}}_{-3/2}\vac\right\rangle_{ev}=\left\langle {G^{\{\psi(u)\}}}_{(2)}G^{\{u\}}\right\rangle_{ev}\\&=4p(k)\langle \psi(u)|u\rangle.
\end{align*}

For (b) observe that, if there is a unitary module for $W^k_{\min}(\g)$ then also $V^{z_i(k)}(\g^\natural)$ admits a unitary module, but then $z_i(k)\in \ZZ_{\ge0}$.
\end{proof}
\begin{theorem}\label{fin} Let $\g$ be a basic simple Lie superalgebra from Table 1.   Let $V=W_{-k}^{\min}(\g)$ with  
$k\neq h^\vee$. Assume that $\g\ne spo(2|m), m=0,1,2$, which correspond to the well-understood cases of Virasoro, Neveu-Schwarz and $N=2$ superconformal algebra. Then  $V$ is a non-trivial unitary vertex algebra if and only if
\begin{enumerate}
\item $\g=sl(2|m), m\geq 3,\  k=1$;
\item  $\g=psl(2|2)$, $k\in\mathbb N+1$;
\item  $\g=spo(2|3)$, $k\in\frac{1}{4}(\mathbb N+2)$;
\item $\g=spo(2|m), m>3$, $k=\frac{1}{2}(\mathbb N+1)$;
\item   $\g=D(2,1;-\frac{m}{m+n})$, $k=\frac{mn}{m+n},\ m,n\in\mathbb N, m+n>2$;
\item   $\g=F(4)$, $k\in\frac{2}{3}(\mathbb N+1)$;
\item  $\g=G(3)$, $k\in\frac{3}{4}(\mathbb N+1)$.
\end{enumerate}
\end{theorem}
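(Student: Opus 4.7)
The plan is to deduce Theorem \ref{fin} as a direct application of Theorem \ref{main}(a) combined with the explicit data in Table 2 and the collapsing criterion of Theorem \ref{oldresults}, case by case. Writing $\ell=-k$ to match the sign convention of Theorem \ref{main}, we need $W_\ell^{\min}(\g)$ to be unitary and non-trivial. By Theorem \ref{main}(a), unitarity is equivalent to either (i) $\g^\natural$ semisimple and $z_i(\ell)\in\Z_+$ for every $i>0$, or (ii) $\g=sl(2|m)$ with $\ell=-1$. So the main work is to read off the condition $z_i(\ell)\in\Z_+$ from Table 2 in each row, translate it to a condition on $k=-\ell$, and then remove the levels for which $W_\ell^{\min}(\g)\cong\C$ using Theorem \ref{oldresults}.

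First I would dispose of the exceptional case (1). For $\g=sl(2|m)$ with $m\ge 3$, $\g^\natural=\C\oplus sl_m$ is not semisimple, so case (ii) applies and gives the single level $k=1$; the fact that $W_{-1}^{\min}(sl(2|m))$ is the non-trivial Heisenberg vertex algebra $M(\C\varpi)$ was already noted inside the proof of Theorem \ref{main}. Next I would handle the cases where $\g^\natural$ is simple: $psl(2|2)$, $spo(2|3)$, $spo(2|m)$ for $m>3$, $F(4)$, $G(3)$. In each of these, $z_1(-k)$ is an affine linear function of $k$ (read from Table 2), and $z_1(-k)\in\Z_+$ translates directly into one of the arithmetic conditions (2)--(4), (6), (7). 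The only level to exclude is the unique $k_0$ at which $z_1(-k_0)=0$, since by Theorem \ref{oldresults} this is precisely the value at which $W^{\min}$ becomes $\C$; in each case $k_0$ is the ``first'' element of the candidate set (e.g.\ $k=1$ for $psl(2|2)$, $k=1/2$ for $spo(2|3)$ and $spo(2|m)$, $k=2/3$ for $F(4)$, $k=3/4$ for $G(3)$), so removing it yields exactly the shifted sets $\mathbb N+1$, $\tfrac{1}{4}(\mathbb N+2)$, $\tfrac12(\mathbb N+1)$, $\tfrac23(\mathbb N+1)$, $\tfrac34(\mathbb N+1)$.

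The delicate case is (5), $\g=D(2,1;a)$, where $\g^\natural=sl_2\oplus sl_2$ has two simple components and the level $k$ and the parameter $a$ must be solved simultaneously from Table 2: setting $z_1(-k)=m-1$ and $z_2(-k)=n-1$ with $m,n\in\NN$ gives the linear system $k=m(1+a)=-na$, which I would solve to obtain $a=-m/(m+n)$ and $k=mn/(m+n)$. Since $\g^\natural$ is not simple, Theorem \ref{oldresults} tells us $W^{\min}$ equals $\C$ only when it collapses to $V_0(sl_2)\otimes V_0(sl_2)\cong\C$, i.e.\ when $z_1=z_2=0$, which corresponds exactly to $m=n=1$; this is the single point $a=-1/2$, $k=1/2$, and excluding it accounts for the restriction $m+n>2$ in the statement. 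I would also need to verify that the parametrization is injective modulo the symmetry $D(2,1;a)\cong D(2,1;a')$ for the other values of $a'$, but this is a routine check.

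Finally, I would verify that the case $\g=osp(4|m)$ with $m>2$ even -- which is on the list of Table 1 but absent from Theorem \ref{fin} -- indeed admits no non-trivial unitary level. Here Table 2 gives $z_1(-k)=-k-m/2$ and $z_2(-k)=k/2-1$; requiring both to lie in $\Z_+$ forces simultaneously $k\le -m/2<0$ and $k\ge 2$, which is impossible. Thus $osp(4|m)$ is correctly omitted. The main potential obstacle is purely bookkeeping for the $D(2,1;a)$ parametrization -- distinguishing the collapse to $\C$ from the collapse to a non-trivial affine vertex algebra -- but once Theorem \ref{main} and Theorem \ref{oldresults} are invoked the classification is essentially mechanical.
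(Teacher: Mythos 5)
Your proposal is correct and follows essentially the same route as the paper: a case-by-case application of Theorem \ref{main} with the Table 2 data, using Theorem \ref{oldresults} to discard the levels where $V$ is trivial, with the same linear-system computation for $D(2,1;a)$. The only (immaterial) divergence is the exclusion of $m=n=1$ for $D(2,1;a)$: you argue via collapse to $V_0(sl_2)\otimes V_0(sl_2)\cong\C$, while the paper notes that $c(k)=0$ together with unitarity forces triviality; both are valid.
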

\begin{proof} The result is obtained by a direct application of Theorem \ref{main}, using the data displayed in  Table 2; one should take care, using  Theorem \ref{oldresults}, to exclude (collapsing)  levels for which $V$ is trivial. Combining the conditions coming from Theorems  \ref{main} and \ref{oldresults}, we obtain that   $z_1(-k)$ has to be a positive integer if $\g^\natural$  is simple: this covers cases (2), (3), (4), (6), (7). For the remaining cases of Table 1, $sl(2|m)$ is handled directly by Theorem  \ref{main}, (2). If $\g=osp(4|m)$, then the conditions $z_1(-k)\in \mathbb Z_+,  z_2(-k)\in \mathbb Z_+$ are not compatible, hence in this case $V$ is never unitary. It remains to deal with $\g=D(2,1;a)$. Then 
by Theorem \ref{main} we have
$-\frac{1}{a}k=N+1$, hence
\begin{equation}\label{primaaa} k= -a(N+1), \ N\in\mathbb Z_+\end{equation}
and  $\frac{1}{1+a}k-1=M\in \mathbb Z_+$, hence, substituting \eqref{prima}, we have
$$-\frac{a}{1+a}(N+1)-1=M,$$
hence $a(M+N+2)+M+1=0,$ and 
$$ a=-\frac{M+1}{M+N+2}=-\frac{m}{m+n},\text{ where } m=M+1,\  n=N+1\in \mathbb N$$
Substituting in \eqref{primaaa}, we get 
$$k=\frac{mn}{m+n}.$$
The case $m=n=1$ is excluded since in this case $c(k)=0$, hence $W^{\min}_k(\g)$ is trivial, since it is unitary.
\end{proof}
\begin{remark} Replacing $\mathbb N$ by $0$ in cases (2)-(4), (6) and (7) of Theorem \ref{fin} we obtain $k$, for which $W_{-k}^{\min}(\g)=\C$.
In case (5) this happens for $m=n=1$.
\end{remark}
\begin{remark} It has been proved in \cite{KMP} that if $\g$ does not appear in Table 1, then $W_k^{\min}(\g)$ can be unitary only if $k$ is collapsing. 
Then $W_k^{\min}(\g)$ is non trivial unitary precisely in the following cases.
\begin{enumerate}
\item $W^{\min}_{-1}(sl(m|n))\cong M(\C),\,m\neq n,n+1,n+2, m\ge2,$ where $M(\C)$ is the Heisenberg vertex algebra  with central charge $c=1$;
\item $W^{\min}_{4/3}(G_2)\cong V_1(sl(2))$ with central charge $c=1$;
\item $W^{\min}_{-2}(osp(m|n))\cong  V_{\frac{m-n-8}{2}}(sl(2)), m-n\geq 10, m\text{ and } n\text{ even}$,  with central charge  \newline $c=\frac{3(m-n-8)}{m-n-4}$.
\end{enumerate}
\begin{remark} Unitary representations for the $N=3$ and $N=4$ superconformal algebras have been studied in \cite{M}, and \cite{ET1}, 
\cite{ET2}, \cite{ET3}, respectively. The same central charges as in cases (2) and (3) of Theorem
\ref{fin}  appear  in \cite[(6)]{ET3}, \cite[(2.3.1)]{M}, respectively.
\end{remark}
\end{remark}
     \begin{remark}
It follows easily from \cite[\S 6, (2.6)]{KW1} for any $W_k(\g,x,f)$ that, if $d \,h^\vee\ne 0$, there are precisely two values $k_1,k_2$ of $k$ which afford the same central charge $c(k)$, and they are related  by 
\begin{equation}\label{relate}
(k_1+h^\vee)(k_2+h^\vee)=\frac{d\,h^\vee}{12(x|x)}.
\end{equation}
This follow from \eqref{ccc} by writing $c(k_1)-c(k_2)=0$. 
A direct check shows that, except for $\g=spo(2|m)$ with $m=0 $ or $1$, if $k_1$ is in the non-trivial unitary range, then $k_2$ is not. 
In the case of $spo(2|m)$ this can be proved using 
that the right hand side of \eqref{relate} equals
$$\frac{(2-m)(3-m)(4-m)}{24}.$$

\end{remark}

\vskip20pt
    \footnotesize{
\noindent{\bf V.K.}: Department of Mathematics, MIT, 77
Mass. Ave, Cambridge, MA 02139;\newline
{\tt kac@math.mit.edu}
\vskip5pt
\noindent{\bf P.MF.}: Politecnico di Milano, Polo regionale di Como,
Via Anzani 4, 22100, Como, Italy;\newline {\tt pierluigi.moseneder@polimi.it}
\vskip5pt
\noindent{\bf P.P.}: Dipartimento di Matematica, Sapienza Universit\`a di Roma, P.le A. Moro 2,
00185, Roma, Italy;\newline {\tt papi@mat.uniroma1.it}
}


\begin{thebibliography}{100}
\bibitem{AKMPP} D.~Adamovi\'c, V.~G. Kac, P.~M\"oseneder Frajria, P.~Papi, O.~Per\v{s}e, {\em Conformal embeddings of affine vertex algebras in minimal
$W$--algebras I: Structural results}, J. Algebra, \textbf{500} (2018), 117--152.

\bibitem{ET1} T.~Eguchi, A.~Taormina, {\em Unitary representations of the $N=4$ superconformal algebra.} Phys. Lett. B \textbf{196} (1987), no. 1, 75--81. 
\bibitem{ET2} T.~Eguchi, A.~Taormina, {\em 
Character formulas for the N=4 superconformal algebra.} Phys. Lett. B \textbf{200} (1988), no. 3, 315--322.
\bibitem{ET3} T.~Eguchi, A.~Taormina, {\em On the unitary representations of N=2 and N=4 superconformal algebras.} Phys. Lett. B \textbf{210} (1988), no. 1-2, 125--132.

\bibitem{GKMP} M.~Gorelik, V.~G. Kac, P.~M\"oseneder Frajria, P.~Papi,  {\em Denominator identities for finite-dimensional Lie superalgebras and Howe duality for compact dual pairs},
Japan. J.  Math., \textbf{7} (2012), 41--134
\bibitem{YK} K.~Iohara, Y.~Koga,\emph{Central extension of Lie superalgebras}, Comment. Math. Helv. 76 (2001), 110--154.
\bibitem{Kacsuper}
V.~G. Kac, \emph{Lie superalgebras},
Advances in Math. \textbf{26} (1977), no. 1, 8--96. 
\bibitem{KRW}V.~G. Kac, S.~Roan, M.~Wakimoto,\emph{
Quantum reduction for affine superalgebras}. 
Comm. Math. Phys. \textbf{241} (2003), no. 2-3, 307--342. 

\bibitem{KW1}V.~G. Kac, M.~Wakimoto,\emph{
Quantum reduction and representation theory of superconformal algebras.}, Adv. in Math. \textbf{185} (2004), 400--458.  
\bibitem{Y} V.~G. Kac, P.~M\"oseneder Frajria, P.~Papi, {\em Yangians versus minimal W -algebras: A surprising coincidence}, Comm.  Contemp. Math., (2020), DOI: 10.1142/S0219199720500364
\bibitem{KMP} V.~G. Kac, P.~M\"oseneder Frajria, P.~Papi, {\em Invariant Hermitian forms on vertex algebras}, arXiv:2008.13178

\bibitem{M} K.~Miki, {\em The representation theory of the $SO(3)$ invariant superconformal algebra}. Internat. J. Modern Phys. \textbf{A} 5 (1990), no. 7, 1293--1318. 
  
  \bibitem{SERG} V.~Serganova, \emph{Automorphisms of simple Lie superalgebras}, Math. USSR Izvestiya Vol. 24, (1985),  no. 3, 539--551.

  
  
  
  
  
  
  
   \end{thebibliography}
   \end{document}